\documentclass[11pt,a4paper]{article}
\usepackage[T1]{fontenc}
\usepackage[margin=1in]{geometry}
\usepackage{amsmath,amssymb,amsthm,mathtools}
\usepackage{enumitem}
\usepackage{microtype}
\usepackage[numbers,sort&compress]{natbib}
\usepackage[hidelinks]{hyperref}
\hypersetup{pdftitle={Proper conflict-free choosability of sparse graphs with girth at least seven},
  pdfauthor={Xingqin Qi, Huimin Song, and Zhulou Cao}}

\newtheorem{theorem}{Theorem}[section]
\newtheorem{lemma}[theorem]{Lemma}
\newtheorem{proposition}[theorem]{Proposition}
\newtheorem{corollary}[theorem]{Corollary}
\newtheorem{conjecture}[theorem]{Conjecture}
\theoremstyle{definition}

\newcommand{\mad}{\operatorname{mad}}
\newcommand{\UU}{\mathcal U}
\newcommand{\col}{\varphi}

\begin{document}
\title{Proper conflict-free choosability of sparse graphs with girth at least seven}
\author{Xingqin Qi\thanks{Email: \href{mailto:qixingqin@sdu.edu.cn}{\texttt{qixingqin@sdu.edu.cn}}.}
  \quad Huimin Song\thanks{Email: \href{mailto:hmsong@sdu.edu.cn}{\texttt{hmsong@sdu.edu.cn}}.}
  \quad Zhulou Cao\thanks{Corresponding author. Email: \href{mailto:zlouc@sdu.edu.cn}{\texttt{zlouc@sdu.edu.cn}}.}\\[0.5em]
  \small School of Mathematics and Statistics, Shandong University\\
  \small Weihai 264209, China}
\date{September 27, 2026}
\maketitle

\begin{abstract}
A proper conflict-free coloring of a graph is a proper vertex coloring in which every non-isolated vertex has a color appearing exactly once in its neighborhood. A graph $G$ is proper conflict-free $(\mathrm{degree}+2)$-choosable if every list assignment $L$ with $|L(v)|\ge d_G(v)+2$ for each $v\in V(G)$ admits such a coloring from the lists. We prove that every graph with girth at least $7$ and maximum average degree less than $8/3$ is proper conflict-free $(\mathrm{degree}+2)$-choosable. Consequently, every planar graph of girth at least $8$ has this property, improving the previously established sufficient girth bound of $9$.
\end{abstract}

\noindent\textbf{Keywords:} proper conflict-free coloring; list coloring; degree-choosability; maximum average degree; planar graph; girth.

\smallskip
\noindent\textbf{2020 Mathematics Subject Classification:} 05C15.

\section{Introduction}
All graphs considered in this paper are finite, simple, and undirected. For a graph $G$ and a vertex $v\in V(G)$, let $N_G(v)$ be the set of neighbors of $v$ and let $d_G(v)=|N_G(v)|$. A vertex coloring $\varphi$ of $G$ is \emph{proper} if $\varphi(u)\ne\varphi(v)$ whenever $uv\in E(G)$. A \emph{proper conflict-free coloring} (PCF coloring for short) is a proper coloring such that, for every non-isolated vertex $v$, some color appears exactly once in $N_G(v)$. A PCF $k$-coloring uses at most $k$ colors.

A \emph{list assignment} $L$ assigns a set $L(v)$ of available colors to each vertex $v$. An \emph{$L$-coloring} is a coloring $\varphi$ satisfying $\varphi(v)\in L(v)$ for every $v$. Given a nonnegative integer-valued function $f$ on $V(G)$, we say that $G$ is \emph{proper conflict-free $f$-choosable} if every list assignment $L$ with $|L(v)|\ge f(v)$ admits a PCF $L$-coloring. In particular, for an integer $k\ge0$, the choice $f(v)=d_G(v)+k$ defines \emph{proper conflict-free $(\mathrm{degree}+k)$-choosability}.

The \emph{girth} $g(G)$ is the length of a shortest cycle in $G$; we set $g(G)=\infty$ if $G$ is a forest. The \emph{maximum average degree} of $G$ is
\[
   \mad(G):=\max_{\varnothing\ne H\subseteq G}\frac{2|E(H)|}{|V(H)|},
\]
where the maximum is taken over all nonempty subgraphs of $G$. We set $\mad(\varnothing)=0$.

Conflict-free coloring was introduced for geometric set systems by Even et al.~\cite{EvenEtAl2003}, motivated by frequency assignment. Pach and Tardos~\cite{PachTardos2009} studied the condition for general hypergraphs and graph neighborhoods. Fabrici, Lu\v{z}ar, Rindo\v{s}ov\'a, and Sot\'ak~\cite{FabriciEtAl2023} introduced the proper version for graph neighborhoods. They proved that every planar graph has a PCF $8$-coloring. A related notion is an \emph{odd coloring}: a proper coloring in which every non-isolated vertex has a color appearing an odd number of times in its neighborhood~\cite{PetrusevskiSkrekovski2022}. Every PCF coloring is an odd coloring.

Caro, Petru\v{s}evski, and \v{S}krekovski~\cite{CaroPetrusevskiSkrekovski2023} obtained PCF coloring bounds in terms of maximum degree and maximum average degree; in particular, $\mad(G)<8/3$ guarantees a PCF $6$-coloring. Cho, Choi, Kwon, and Park~\cite{ChoChoiKwonPark2025} obtained sharp maximum-average-degree bounds for fixed numbers of colors and proved PCF $7$-colorability for planar graphs of girth at least $5$. Anderson et al.~\cite{AndersonEtAl2025} developed the forb-flex method for odd and PCF coloring of planar graphs. More recently, Jim\'enez, Lintzmayer, and Sambinelli~\cite{JimenezLintzmayerSambinelli2026} proved in a preprint that every planar graph has a PCF $7$-coloring.

Structural bounds for PCF coloring and its list version were obtained by Hickingbotham~\cite{Hickingbotham2023} and Liu~\cite{Liu2024}, respectively. Cranston and Liu~\cite{CranstonLiu2024} and Liu and Reed~\cite{LiuReed2025} studied asymptotic bounds in terms of maximum degree. In these results, the number of available colors is controlled by a parameter of the whole graph. Here we consider list sizes that depend on the degree of each vertex.

Kashima, \v{S}krekovski, and Xu~\cite{KashimaSkrekovskiXuLists} studied degree-dependent list sizes and proposed the following conjecture.
\begin{conjecture}[\cite{KashimaSkrekovskiXuLists}]\label{conj:degree-two}
Every connected graph other than $C_5$ is proper conflict-free $(\mathrm{degree}+2)$-choosable.
\end{conjecture}
They proved the conjecture for connected graphs of maximum degree at most $3$~\cite{KashimaSkrekovskiXuLists} and connected outerplanar graphs~\cite{KashimaSkrekovskiXuOuterplanar}, in both cases excluding $C_5$. A graph is \emph{$d$-degenerate} if every nonempty subgraph has a vertex of degree at most $d$. They also proved that every $d$-degenerate graph is proper conflict-free $(\mathrm{degree}+d+1)$-choosable and that every tree is proper conflict-free $(\mathrm{degree}+1)$-choosable~\cite{KashimaSkrekovskiXuDegeneracy}. Lu, Ying, and Song~\cite{LuYingSong2026} obtained the stronger list bound of four colors at $2$-vertices and $d_G(v)+1$ colors at all other vertices for connected graphs other than $C_5$ with maximum degree at most $3$ or maximum average degree less than $12/5$.

For planar graphs, Wang and Zhang~\cite{WangZhang2025} proved proper conflict-free $(\mathrm{degree}+2)$-choosability when the girth is at least $12$ and asked whether girth at least $6$ suffices~\cite[Problem~5.4]{WangZhang2025}. Kashima, \v{S}krekovski, and Xu~\cite{KashimaSkrekovskiXu2026} proved that every connected graph $G\ne C_5$ with $\mad(G)<18/7$ is proper conflict-free $(\mathrm{degree}+2)$-choosable. This implies the planar case of girth at least $9$. We prove the following.

\begin{theorem}\label{thm:main}
Let $G$ be a finite simple graph. If
\[
   g(G)\ge 7
   \qquad\text{and}\qquad
   \mad(G)<\frac83,
\]
then $G$ is proper conflict-free $(\mathrm{degree}+2)$-choosable.
\end{theorem}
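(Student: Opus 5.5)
We argue by minimal counterexample combined with discharging. Let $G$ be a counterexample minimizing $|V(G)|+|E(G)|$, with a list assignment $L$, $|L(v)|\ge d_G(v)+2$, admitting no PCF $L$-coloring. Note that $C_5$ is excluded automatically because it has girth $5$. Girth and $\mad$ are both inherited by subgraphs, so every proper subgraph $H$ satisfies the hypotheses. Since $d_H(v)\le d_G(v)$, the restriction of $L$ has large enough lists, and $H$ is PCF $L$-colorable.

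The first stage is a catalogue of reducible configurations. In each case we delete a small set $S$ of vertices (or edges), color $G-S$ by minimality, and extend the coloring. When extending to a vertex $v$, it must avoid the following:
\begin{itemize}
\item[(i)] the colors of its neighbours (properness);
\item[(ii)] for each colored neighbour $u$, the color that would destroy $u$'s unique color, namely the current unique color $c(u)$ if $u$ has exactly one witness;
\item[(iii)] for $v$ itself, at least one color must appear exactly once in $N(v)$.
\end{itemize}
A vertex of degree $d$ therefore faces roughly $2d$ forbidden colors against $d+2$ available ones, so the configurations need care. We aim to prove:
\begin{itemize}
\item[(a)] $\delta(G)\ge 2$. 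A $1$-vertex $v$ with neighbour $u$ has $3$ colors and must avoid $\varphi(u)$ and $c(u)$. Its own condition is automatic. If deleting $v$ changes $u$'s degree so that $u$ needs a new witness, we recolor $u$ or choose $v$'s color to be unique at $u$.
\item[(b)] No path of three consecutive $2$-vertices. We delete the middle vertices and recolor them greedily, using that each $2$-vertex has $4$ colors and the girth $\ge 7$ keeps the relevant neighbourhoods disjoint.
\item[(c)] Bounds on $3$-vertices adjacent to $2$-vertices. A $3$-vertex cannot be incident with three $2$-threads in a way that makes it too poor, and a $3$-vertex adjacent to two $2$-vertices whose other ends are weak is also reducible.
\item[(d)] Analogous conditions on $k$-vertices for $k\ge4$ incident to many $2$-threads of length $2$.
\end{itemize}
Girth at least $7$ matters here: the second neighbourhoods of a deleted structure are disjoint trees, so uncolored vertices do not constrain each other through short cycles. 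This is where we beat the $18/7$ argument, which needed no girth assumption.

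The second stage is discharging. Give each vertex the charge $\mu(v)=3d(v)-8$. Because $\mad(G)<8/3$, the total charge is negative. The charges by degree are:
\begin{itemize}
\item $2$-vertices start at $-2$;
\item $3$-vertices start at $+1$;
\item $k$-vertices start at $3k-8\ge k$ for $k\ge4$.
\end{itemize}
The rules are as follows:
\begin{itemize}
\item[(R1)] Each $3^+$-vertex sends $1$ to each adjacent $2$-vertex lying on a $1$-thread (a single $2$-vertex between two $3^+$-vertices).
\item[(R2)] Each $3^+$-vertex sends an appropriate amount, $1$, to the adjacent vertex of a $2$-thread.
\item[(R3)] Possibly, a $3$-vertex with surplus sends $1/2$ through to weak $3$-vertices at distance two.
\end{itemize}
A $k\ge4$ vertex sends at most $k$ and so ends nonnegative. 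A $2$-vertex receives $2$, which follows from (b). The $3$-vertices are where configuration (c) must be tuned so that each ends nonnegative. This gives total charge $\ge0$, a contradiction.

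The main obstacle is the $3$-vertex accounting together with reducibility of the corresponding configuration. A $3$-vertex has only $5$ colors, and when it is adjacent to two or three $2$-vertices its own conflict-free condition interacts with theirs. We expect to need the shape "a $3$-vertex adjacent to three $2$-vertices, at least one of which lies on a $2$-thread, is reducible." For this extension we first try the following order:
\begin{enumerate}
\item color the $3$-vertex so it differs from the three far endpoints' colors;
\item give the $2$-vertices colors that are distinct at the $3$-vertex, so the $3$-vertex gets a unique color;
\item count the remaining options, using $4$ colors per $2$-vertex minus at most $3$ forbidden.
\end{enumerate}
If the counting fails, we would refine the discharging with distance-two transfers exploiting the girth.
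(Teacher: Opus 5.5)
Your overall frame is the paper's: a minimum counterexample, charge $3d(v)-8$, and reducible configurations combined with discharging. However, the configurations and rules you list do not close the argument, and the lemma that carries the proof is missing.

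\textbf{Threads and $4^+$-vertices.} Configuration (b) cannot be obtained by greedy extension, and the paper does not claim it; it only excludes $4$-threads. Take a $3$-thread $x-v_1-v_2-v_3-y$ and a coloring of $G-\{v_1,v_2,v_3\}$ in which $x,y$ have unique witnesses $c_x,c_y$. Let $L(v_1)=\{a,b,\varphi(x),c_x\}$, $L(v_2)=\{a,b,\varphi(x),\varphi(y)\}$ and $L(v_3)=\{a,b,\varphi(y),c_y\}$. Then $v_1,v_3$ must take the two colors $a,b$, since they must differ for $v_2$ to be conflict-free. So $v_2$ must avoid $a,b$ by properness and $\varphi(x),\varphi(y)$ by conflict-freeness at $v_1,v_3$, and nothing is left.

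Your rule (R2) is also too weak: on a $2$-thread $x-u-w-y$ it gives $u$ and $w$ only $1$ each, so they end at $-1$. Once $2$-vertices are paid correctly and $3$-threads are allowed, a $4^+$-vertex must pay essentially $1$, $2$ or $3$ per incident $1$-, $2$- or $3$-thread, plus extra for $1$-threads ending at $3$-vertices. So ``sends at most $k$'' is gone. For example, a $4$-vertex with one $3^+$-neighbor and three $2$-threads pays $6>4$ and must be shown reducible.

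What you actually need for $4\le k\le7$ is $n_0+\tfrac32n_1+2n_2+3n_3\le 3k-8$, which is Lemma~\ref{lem:weighted}. Proving it is the core of the paper. It uses reductions (R4)--(R6): every $3^+$-vertex has a $3^+$-neighbor, and the number of incident $3$-threads is capped. It also needs:
\begin{itemize}
\item two-element control sets for $2$- and $3$-threads;
\item a root palette $R$ with an incidence count that yields a root color lying in at most one control set;
\item the singleton-selection lemma;
\item a separate analysis of the tight case $|R|=N$.
\end{itemize}
Your item (d) gives no inequality of this kind.

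\textbf{$3$-vertices.} Under your rule (R1), a $3$-vertex with two $2$-neighbors on $1$-threads pays $2$ from charge $1$. Your (R3) is too vague to repair this; in particular, a $3$-vertex at the far end of such a $1$-thread has no surplus, since it already pays $1$ to that $2$-vertex. The paper needs its reductions (R4) and (R7). (R4) says no $3$-vertex has three $2$-neighbors. (R7) says that if a $3$-vertex has two $2$-neighbors, both far ends are $4^+$-vertices. Then the $3$-vertex pays $1/2$ to each $2$-neighbor and the $4^+$ end pays $3/2$.

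Your extension order for a $3$-vertex $u_0$ with $2$-neighbors $u_1,u_2,u_3$ and far ends $x_i$ does not establish (R4), even when all three are $1$-threads. Take $L(u_0)=\{\alpha_1,\alpha_2,\varphi(x_1),\varphi(x_2),\varphi(x_3)\}$ and $L(u_i)=\{\alpha_1,\alpha_2,\varphi(x_i),c_{x_i}\}$. Either choice $\varphi(u_0)=\alpha_j$ forces all three $u_i$ to the other $\alpha$, so $u_0$ sees a single color three times.

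The paper avoids this by choosing $\alpha\in L(u_0)\setminus L(u_1)$ before coloring $H$. It then colors $H$ from the lists $L(x_i)\setminus\{\alpha\}$, which is legitimate because each $x_i$ lost a neighbor. This guarantees $\varphi(x_i)\ne\alpha$ and leaves $u_1$ at least two admissible colors, so the singleton-selection lemma finishes the extension.
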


The maximum-average-degree bound in Theorem~\ref{thm:main} is larger than $18/7$, but the theorem also assumes girth at least $7$. For a planar graph of girth at least $8$, Euler's formula gives $|E(H)|\le\frac43(|V(H)|-2)$ for every connected subgraph $H$ containing a cycle, while every tree has average degree less than $2$. Summing over the components of an arbitrary nonempty subgraph gives $\mad(G)<8/3$. Thus Theorem~\ref{thm:main} yields the following improvement of the planar girth bound from $9$ to $8$.

\begin{corollary}\label{cor:planar}
Every planar graph of girth at least $8$ is proper conflict-free $(\mathrm{degree}+2)$-choosable.
\end{corollary}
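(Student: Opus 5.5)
The statement immediately above is Corollary~\ref{cor:planar}, and it follows from Theorem~\ref{thm:main} once we check its two hypotheses. Let $G$ be a planar graph with $g(G)\ge 8$. Then $g(G)\ge 7$ holds trivially, so it remains to show that $\mad(G)<8/3$. Fix a nonempty subgraph $H\subseteq G$; we must show $2|E(H)|<\frac83|V(H)|$.

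Since both sides are additive over connected components, it suffices to prove the strict inequality for each component $H'$ of $H$. There are two cases.

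\emph{Case 1: $H'$ is a tree.} Then $|E(H')|=|V(H')|-1$, so
\[
2|E(H')|=2|V(H')|-2<\tfrac83|V(H')|.
\]

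\emph{Case 2: $H'$ contains a cycle.} Take a plane embedding of $H'$. We need every face boundary walk to have length at least $8$. A boundary walk of a face in a connected plane graph that is not a tree contains a cycle of $H'$, and every cycle of $H'$ has length at least $g(G)\ge 8$. Counting edge–face incidences therefore gives $2|E(H')|\ge 8|F|$. Combining this with Euler's formula $|V|-|E|+|F|=2$ yields
\[
|E(H')|\le \tfrac43\bigl(|V(H')|-2\bigr),
\]
so $2|E(H')|\le \frac83|V(H')|-\frac{16}{3}<\frac83|V(H')|$.

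Summing over all components of $H$ gives $2|E(H)|<\frac83|V(H)|$, because every component satisfies the strict inequality. Hence $\mad(G)<8/3$. Theorem~\ref{thm:main} then shows that $G$ is proper conflict-free $(\mathrm{degree}+2)$-choosable.

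The only step needing care is the face-length claim in Case 2. For a connected non-tree plane graph, each face boundary may contain bridges traversed twice, but the walk still encloses a cycle, so its length is at least that cycle's length, which is at least $8$.
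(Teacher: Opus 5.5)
Your proof is correct and follows the same route as the paper. You bound $|E(H')|\le\frac43(|V(H')|-2)$ via Euler's formula for components containing a cycle, use $|E|=|V|-1$ for tree components, sum over components to get $\mad(G)<8/3$, and apply Theorem~\ref{thm:main}; your justification that every face boundary walk has length at least $8$ (each face boundary of a connected plane graph containing a cycle itself contains a cycle) fills in a detail the paper leaves implicit.
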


The proof uses reducible configurations and discharging.

\section{Preliminaries}
We omit the subscript $G$ from $N_G(v)$ and $d_G(v)$ when the graph is clear. A $k$-vertex (respectively, a $k^+$-vertex) has degree $k$ (respectively, at least $k$); a $k$-neighbor is a neighbor of degree $k$.

For a subgraph $H$ of $G$, a coloring $\col$ of $H$, and a vertex $x\in V(H)$, let
\[
  \UU_\col(x,H):=\left\{c:\bigl|\{y\in N_H(x):\col(y)=c\}\bigr|=1\right\}.
\]
A color in $\UU_\col(x,H)$ is called a \emph{conflict-free color} of $x$, or a \emph{witness color} at $x$. Thus $\UU_\col(x,H)\ne\varnothing$ whenever $\col$ is PCF and $x$ is non-isolated in $H$. We use $\col$ also for an extension of the coloring to additional vertices.

Following~\cite{KashimaSkrekovskiXu2026}, a \emph{$t$-thread} is a path on $t$ vertices of degree $2$ in $G$. It is \emph{maximal} if it is not contained in a longer such path. The vertices adjacent to its ends outside the thread are its \emph{boundary vertices}. A thread is \emph{incident with} a vertex $x$ if $x$ is one of its boundary vertices. We write
\[
  x-v_1-\cdots-v_t-y
\]
for the thread together with its boundary vertices. Here $t$ counts the $2$-vertices $v_1,\ldots,v_t$; when $x\ne y$, the displayed path has $t+1$ edges. The definition does not require the two boundary vertices to be distinct. When extending a coloring over a thread incident with a specified vertex $x$, we call $x$ the \emph{root}, $v_1$ its \emph{first vertex}, and $y$ its \emph{other endpoint}. We also use \emph{branch} for a thread viewed from its root.

\begin{lemma}\label{lem:three-neighbors}
If a vertex has three neighbors and at least two colors occur on those three neighbors, then one of the colors occurs exactly once.
\end{lemma}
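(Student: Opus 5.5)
The argument is a direct case analysis on the multiset of colors appearing on the three neighbors. Let the neighbors carry the colors $a,b,c$, listed with multiplicity. By hypothesis the multiset $\{a,b,c\}$ contains at least two distinct values, and since there are only three entries it contains at most three distinct values. I split according to the number of distinct colors.

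\emph{Exactly three distinct colors.} Then $a,b,c$ are pairwise different, so each color occurs exactly once, and any of them works; in particular some color occurs exactly once.

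\emph{Exactly two distinct colors.} Let the two colors be $\alpha$ and $\beta$, occurring $m_\alpha$ and $m_\beta$ times, with $m_\alpha,m_\beta\ge1$ and $m_\alpha+m_\beta=3$. The only possibilities are $(m_\alpha,m_\beta)\in\{(1,2),(2,1)\}$, so one of the two colors occurs exactly once.

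In both cases a color occurs exactly once on the three neighbors; in the notation $\UU_\col(x,H)$ this says $\UU_\col(x,H)\neq\varnothing$ whenever $x$ has exactly three neighbors in $H$ and they are not all colored alike. There is no real obstacle here. The only point to keep explicit is that the counting uses that the vertex has exactly three neighbors, so the multiplicities of the colors on them sum to $3$. With four neighbors, for instance, the pattern $2+2$ shows that the statement fails.
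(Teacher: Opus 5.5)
Your proof is correct and is essentially the paper's argument. The paper compresses the same count into one line by contradiction: if no color occurred exactly once, each of the at least two colors would occur at least twice, requiring at least four neighbors. You instead enumerate the multiplicity patterns that sum to $3$.
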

\begin{proof}
If no color occurred exactly once, each of the at least two colors would occur at least twice, requiring at least four neighbors.
\end{proof}

\subsection{A minimum counterexample}\label{sec:minimal}

Assume, for contradiction, that Theorem~\ref{thm:main} is false.
Since a graph is proper conflict-free list-colorable if and only if each
of its components is, we may choose a connected counterexample $G$ with
$|V(G)|$ minimum.  Fix a list assignment $L$ witnessing the failure.
By deleting excess colors from lists, we assume throughout that
\[
  |L(v)|=d_G(v)+2 \qquad (v\in V(G)).
\]
For every proper induced subgraph $H$ of $G$, we have $g(H)\ge7$, $\mad(H)<8/3$, and $|L(x)|=d_G(x)+2\ge d_H(x)+2$ for all $x\in V(H)$. Hence minimality gives a PCF coloring of $H$ from these lists. We also apply minimality to other list assignments on $H$ when their sizes are at least $d_H(x)+2$.

It is known that every cycle distinct from $C_5$ is proper conflict-free
$4$-choosable; see, for example, \cite{KashimaSkrekovskiXu2026}.
Consequently $G$ is not a cycle.

\section{Local extension lemmas}\label{sec:branch}
Let $H$ be the graph obtained by deleting the root $v$ and the internal vertices of the threads to be colored, and let $\col$ be a PCF $L$-coloring of $H$. Each deleted $2$-vertex has a list of at least four colors. The following lemmas extend a prescribed color $\alpha$ at $v$ over these threads. They ensure properness on the restored edges and the conflict-free condition at the internal and boundary vertices; the conditions at $v$ and its other neighbors are checked in the applications.

For each non-isolated boundary vertex $x$ of $H$, fix $c_x\in\UU_\col(x,H)$. Such a color exists, and properness gives $c_x\ne\col(x)$. When several threads end at $x$, we use the same $c_x$ on all of them. Requiring every new neighbor of $x$ to avoid $c_x$ preserves this witness.

\begin{lemma}[A $1$-thread ending at a $4^+$-vertex]\label{lem:one-four}
Consider a path $vux$ with $d_G(u)=2$ and $d_G(x)\ge4$. Suppose $x$ is non-isolated in $H$, and fix $c_x\in\UU_\col(x,H)$. If $\alpha\ne\col(x)$, there is a color for $u$ such that the edges $vu,ux$ are properly colored, $u$ is conflict-free, and $c_x$ remains a witness at $x$.
\end{lemma}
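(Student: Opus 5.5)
Here $u$ has exactly the two neighbors $v$ and $x$ in $G$, and $|L(u)|\ge4$. The plan is to choose $\col(u)$ from $L(u)$ while avoiding at most three forbidden colors, namely $\alpha$, $\col(x)$ and $c_x$. The requirements to satisfy are the following.

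\begin{itemize}[nosep]
\item \emph{Properness on $vu$ and $ux$.} This requires $\col(u)\notin\{\alpha,\col(x)\}$.
\item \emph{Conflict-freeness at $u$.} The vertex $u$ has exactly two neighbors, $v$ and $x$, colored $\alpha$ and $\col(x)$. By hypothesis these colors are distinct, so each appears exactly once in $N(u)$, and both $\alpha$ and $\col(x)$ are witness colors at $u$. No restriction on $\col(u)$ is needed for this.
\item \emph{Preserving $c_x$ at $x$.} In $H$, the color $c_x$ appears exactly once on $N_H(x)$. Adding $u$ as a new neighbor of $x$ keeps this count equal to one provided $\col(u)\ne c_x$.
\end{itemize}

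Thus the forbidden set is $\{\alpha,\col(x),c_x\}$, which has at most three elements. Since $|L(u)|\ge4$, some color of $L(u)$ remains available, and we assign it to $u$.

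A few points need care.
\begin{itemize}[nosep]
\item The witness $c_x$ exists because $x$ is non-isolated in $H$. Properness of $\col$ on $H$ gives $c_x\ne\col(x)$, although the argument only needs the forbidden set to have size at most three.
\item The hypothesis $d_G(x)\ge4$ is not used in this counting. It matters in the applications, where $x$ may receive several new neighbors from different threads. Each of them avoids the same $c_x$, so the witness survives all of them simultaneously; this is consistent with fixing a single $c_x$ for all threads ending at $x$.
\item The other threads ending at $x$ do not interfere. Each new neighbor of $x$ avoids $c_x$, so additional occurrences of other colors cannot destroy $c_x$ as a witness.
\end{itemize}

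There is no real obstacle. The only subtlety is checking that the witness at $u$ needs no further constraint, and this follows immediately from $\alpha\ne\col(x)$.
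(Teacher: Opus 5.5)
Your proposal is correct and matches the paper's proof. Both choose $\col(u)\in L(u)\setminus\{\alpha,\col(x),c_x\}$, which is nonempty since $|L(u)|\ge4$, and then check properness on $vu,ux$, conflict-freeness at $u$ via $\alpha\ne\col(x)$, and preservation of $c_x$ at $x$.
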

\begin{proof}
Choose
\[
  \col(u)\in L(u)\setminus\{\alpha,\col(x),c_x\}.
\]
Such a color exists since $|L(u)|\ge4$. It differs from the colors on $v$ and $x$. These two colors are distinct, so $u$ is conflict-free. Since $\col(u)\ne c_x$, the witness at $x$ is preserved.
\end{proof}

\begin{lemma}[A $1$-thread ending at a $3$-vertex]\label{lem:one-three}
Consider a path $vux$ with $d_G(u)=2$ and $d_G(x)=3$. Suppose both neighbors of $x$ other than $u$ belong to $H$. If $\alpha\ne\col(x)$, then every color in $L(u)\setminus\{\alpha,\col(x)\}$ properly colors the edges $vu,ux$ and makes $u$ and $x$ conflict-free. In particular, there are at least two such colors.
\end{lemma}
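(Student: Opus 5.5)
The argument is a direct check, in the same spirit as Lemma~\ref{lem:one-four}; the only new ingredient is Lemma~\ref{lem:three-neighbors}, applied at the $3$-vertex $x$. Fix any $\beta\in L(u)\setminus\{\alpha,\col(x)\}$ and set $\col(u)=\beta$. Since $|L(u)|\ge 4$, at least two such colors exist, which gives the final sentence of the statement. The remaining work is to verify three properties for an arbitrary such $\beta$.

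\emph{Properness.} We have $\beta\ne\alpha=\col(v)$ and $\beta\ne\col(x)$, so both edges $vu$ and $ux$ are properly colored.

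\emph{Conflict-freeness at $u$.} The neighbors of $u$ are exactly $v$ and $x$. Their colors $\alpha$ and $\col(x)$ are distinct by hypothesis. Hence $\alpha$ occurs exactly once in $N(u)$, and $u$ is conflict-free.

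\emph{Conflict-freeness at $x$.} Let $x_1,x_2$ be the other two neighbors of $x$. Both lie in $H$, and since $g(G)\ge 7$ neither equals $v$. The key point is that $x$ has degree $1$ in $G-H$ (only $u$ is deleted among its neighbors), so $x$ has degree $2$ in $H$ and is non-isolated there. Because $\col$ is a PCF coloring of $H$, we get $\col(x_1)\ne\col(x_2)$: with two neighbors, a witness color exists only if the two colors differ. After adding $u$, the three neighbors of $x$ carry the colors $\col(x_1),\col(x_2),\beta$, and at least two distinct colors occur among them. Lemma~\ref{lem:three-neighbors} then gives a color occurring exactly once, so $x$ is conflict-free regardless of which $\beta$ was chosen. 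Properness at $x$ itself is already covered by $\beta\ne\col(x)$.

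The one step that needs care is this observation that $x$ has exactly two neighbors in $H$, so that they receive distinct colors. It relies on $x_1,x_2\in H$ and on $x\ne v$. If $x=v$, the path $vux$ would not be a path, and in any case the girth assumption rules it out. Note that, unlike Lemma~\ref{lem:one-four}, no witness $c_x$ needs to be avoided here. This is why two colors remain available for $u$.
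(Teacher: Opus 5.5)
Your proof is correct and follows the paper's argument essentially step for step. The two neighbors of $x$ in $H$ get distinct colors because $\col$ is conflict-free at $x$, which has degree $2$ in $H$; hence Lemma~\ref{lem:three-neighbors} applies at $x$ for every choice of $\col(u)$, and $u$ is conflict-free because $\alpha\ne\col(x)$. One wording slip: where you write ``$x$ has degree $1$ in $G-H$,'' you mean that $u$ is the only neighbor of $x$ outside $H$, which is exactly the lemma's hypothesis.
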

\begin{proof}
The two neighbors of $x$ in $H$ have distinct colors, since $\col$ is conflict-free at $x$. Choose
\[
  \col(u)\in L(u)\setminus\{\alpha,\col(x)\}.
\]
At least two colors are available. Both incident edges of $u$ are properly colored, and the colors on its two neighbors are distinct. The three neighbors of $x$ still use at least two colors, so Lemma~\ref{lem:three-neighbors} applies at $x$.
\end{proof}

\begin{lemma}[Control set for a $2$-thread]\label{lem:two-thread}
Consider a path $v-u-w-x$ with $d_G(u)=d_G(w)=2$, and suppose $x$ is non-isolated in $H$. Put $p=\col(x)$ and fix $c_x\in\UU_\col(x,H)$. Choose a two-element set
\[
  B\subseteq L(w)\setminus\{p,c_x\}.
\]
For every root color $\alpha$, there is a set $F_\alpha\subseteq L(u)$ such that
\[
  \alpha\notin B \Longrightarrow |F_\alpha|\ge2,
  \qquad
  \alpha\in B \Longrightarrow |F_\alpha|\ge1.
\]
For every $a\in F_\alpha$, assigning $a$ to $u$ extends to a coloring of $w$ that is proper on the path, is conflict-free at $u,w$, and preserves $c_x$ at $x$.
\end{lemma}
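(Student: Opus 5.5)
The plan is to define $F_\alpha$ explicitly by listing every local constraint that the colors $a=\col(u)$ and $b=\col(w)$ must satisfy, and then count. The constraints are as follows.
\begin{itemize}[leftmargin=*]
\item Properness on $vu,uw,wx$ requires $a\ne\alpha$, $a\ne b$ and $b\ne p$.
\item Conflict-freeness at $u$ needs its two neighbors $v,w$ to have distinct colors, i.e.\ $b\ne\alpha$.
\item Conflict-freeness at $w$ needs $a\ne p$.
\item Preserving the witness $c_x$ at $x$ requires $b\ne c_x$, because $w$ becomes a new neighbor of $x$ and must not repeat $c_x$.
\end{itemize}
We will always take $b\in B$. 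The conditions $b\ne p$ and $b\ne c_x$ then hold automatically. Such a set $B$ exists because $|L(w)|\ge4$ implies $|L(w)\setminus\{p,c_x\}|\ge2$.

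With $b\in B$, the remaining requirements are $a\in L(u)\setminus\{\alpha,p\}$ and $b\in B\setminus\{\alpha,a\}$. Accordingly, define
\[
F_\alpha:=\{a\in L(u)\setminus\{\alpha,p\}: B\setminus\{\alpha,a\}\ne\varnothing\}.
\]
By construction, every $a\in F_\alpha$ admits some $b\in B\setminus\{\alpha,a\}$, and the coloring $a,b$ satisfies all the constraints listed above. This gives the extension property stated in the lemma.

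It remains to bound $|F_\alpha|$, and we split into two cases.
\begin{itemize}[leftmargin=*]
\item If $\alpha\notin B$, then $B\setminus\{\alpha,a\}=B\setminus\{a\}$ still has at least one element, since $|B|=2$. Hence $F_\alpha=L(u)\setminus\{\alpha,p\}$, which has at least $4-2=2$ elements.
\item If $\alpha\in B$, write $B=\{\alpha,\beta\}$. The condition $B\setminus\{\alpha,a\}\ne\varnothing$ is then equivalent to $a\ne\beta$. Hence $F_\alpha=L(u)\setminus\{\alpha,p,\beta\}$, which has at least one element.
\end{itemize}

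No step here is a serious obstacle. The only point needing care is to make sure that every constraint involving the fixed data $p$, $c_x$ and $\alpha$ is accounted for, and in particular to note that conflict-freeness at $u$ is exactly the condition $b\ne\alpha$.
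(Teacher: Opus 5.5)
Your proposal is correct and is essentially the paper's proof: the paper also takes $F_\alpha=L(u)\setminus\{\alpha,p\}$ when $\alpha\notin B$ and $F_\alpha=L(u)\setminus\{\alpha,p,\beta\}$ when $B=\{\alpha,\beta\}$, coloring $w$ from $B\setminus\{a,\alpha\}$. These are exactly the sets your unified definition reduces to, and your constraint list ($b\ne\alpha$ at $u$, $a\ne p$ at $w$, and $b\notin\{p,c_x\}$ automatically from $b\in B$) is the same verification the paper carries out.
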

\begin{proof}
The set $B$ exists because $|L(w)|\ge4$ and at most two colors are excluded. If $\alpha\notin B$, let $F_\alpha=L(u)\setminus\{\alpha,p\}$. Then $|F_\alpha|\ge2$. For $a\in F_\alpha$, set $\col(u)=a$ and choose $\col(w)\in B\setminus\{a,\alpha\}$.

If $\alpha\in B$, write $B=\{\alpha,\beta\}$ and let
\[
  F_\alpha=L(u)\setminus\{\alpha,p,\beta\}.
\]
Then $|F_\alpha|\ge1$. For $a\in F_\alpha$, set $\col(u)=a$ and $\col(w)=\beta$.

In both cases, $\col(w)$ differs from $p,c_x,a,\alpha$, and $a$ differs from $\alpha,p$. Thus the path is properly colored. The neighbors of $u$ have distinct colors $\alpha,\col(w)$, and the neighbors of $w$ have distinct colors $a,p$. Finally, $\col(w)\ne c_x$ preserves the witness at $x$.
\end{proof}

We call $B$ a \emph{control set}. Relative to a root color $\alpha$, the branch is \emph{flexible} if $\alpha\notin B$ and \emph{rigid} if $\alpha\in B$. These terms refer to the guaranteed number of choices in the lemma: at least two for a flexible branch and at least one for a rigid branch.

\begin{lemma}[Control set for a $3$-thread]\label{lem:three-thread}
Consider a path $v-u-z-w-x$ with $d_G(u)=d_G(z)=d_G(w)=2$, and suppose $x$ is non-isolated in $H$. Put $p=\col(x)$ and fix $c_x\in\UU_\col(x,H)$. There is a two-element set $C\subseteq L(u)$ such that, for every root color $\alpha$, each choice $\col(u)\in C\setminus\{\alpha\}$ extends over $z,w$, properly coloring the path, making $u,z,w$ conflict-free, and preserving $c_x$ at $x$.
\end{lemma}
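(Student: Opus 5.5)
The plan is to construct $C$ directly, in the spirit of Lemma~\ref{lem:two-thread}. First fix a control set at the far end of the thread, then choose $C$ disjoint from it. Concretely, since $|L(w)|\ge4$, choose a two-element set
\[
  B\subseteq L(w)\setminus\{p,c_x\},
\]
and then, since $|L(u)|\ge4$, choose a two-element set $C\subseteq L(u)\setminus B$. Disjointness of $C$ and $B$ is the key design choice. Whatever color $a\in C$ goes on $u$, it can never clash with the color later put on $w$. So both elements of $B$ stay available for $w$.

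Next, fix an arbitrary root color $\alpha$ and $a\in C\setminus\{\alpha\}$, and set $\col(u)=a$. The order of choices matters. I will color the middle vertex $z$ before $w$, because $z$ carries the most constraints. Choose
\[
  \col(z)\in L(z)\setminus\{a,\alpha,p\},
\]
which is possible since $|L(z)|\ge4$. Then choose $\col(w)\in B\setminus\{\col(z)\}$, which is possible since $|B|=2$.

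Coloring $w$ first would be the main obstacle to avoid. Then $z$ would have to avoid the four colors $a,\alpha,p,\col(w)$ and might have no color left. Deferring $w$ and using that both elements of $B$ survive (because $a\notin B$) resolves this.

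The remaining task is to verify all requirements; I expect these checks to be routine.
\begin{enumerate}[label=(\roman*)]
  \item \emph{Properness.} We have $a\ne\alpha$. Next, $\col(z)\ne a$ and $\col(z)\ne\col(w)$. Finally, $\col(w)\ne p$ since $B$ avoids $p$.
  \item \emph{Conflict-freeness at $u$.} Its neighbors carry $\alpha$ and $\col(z)$, which are distinct.
  \item \emph{Conflict-freeness at $z$.} Its neighbors carry $a$ and $\col(w)$, which are distinct because $a\notin B$.
  \item \emph{Conflict-freeness at $w$.} Its neighbors carry $\col(z)$ and $p$, which are distinct by the choice of $\col(z)$.
  \item \emph{The witness at $x$.} The only new neighbor of $x$ is $w$, and $\col(w)\ne c_x$ since $B$ avoids $c_x$. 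Hence $c_x$ remains a witness at $x$.
\end{enumerate}
The case $\alpha=p$ or $\alpha\in B$ causes no trouble, since none of the steps above relied on these being distinct.
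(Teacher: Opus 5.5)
Your proposal is correct and follows the paper's proof essentially step for step. You choose a two-element set in $L(w)\setminus\{p,c_x\}$, take $C\subseteq L(u)$ disjoint from it, color $u$ from $C\setminus\{\alpha\}$, then color $z$ avoiding $\{a,\alpha,p\}$, and finish $w$ inside the reserved pair. Your verification of properness, of conflict-freeness at $u,z,w$, and of the preserved witness $c_x$ is complete.
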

\begin{proof}
Since $|L(w)|\ge4$, choose a two-element set $W\subseteq L(w)\setminus\{p,c_x\}$. Since $|L(u)|\ge4$ and $|W|=2$, choose a two-element set $C\subseteq L(u)\setminus W$. Assign to $u$ any color in $C\setminus\{\alpha\}$. Next choose
\[
  \col(z)\in L(z)\setminus\{\alpha,\col(u),p\},
\]
and then choose $\col(w)\in W\setminus\{\col(z)\}$. Both choices are possible. Since $C\cap W=\varnothing$, we have $\col(u)\ne\col(w)$. The neighbors of $u,z,w$ therefore have distinct colors in each case. All path edges are properly colored, and $\col(w)\ne c_x$ preserves the witness at $x$.
\end{proof}

The set $C$ is the control set of the $3$-thread. As for a $2$-thread, we call the branch flexible if $\alpha\notin C$ and rigid if $\alpha\in C$. The set $C\setminus\{\alpha\}$ then has two colors or one color, respectively.

\begin{lemma}[Simultaneous extension at a shared boundary]\label{lem:shared-boundary}
Let $\mathcal B$ be a family of maximal $2$- and $3$-threads incident with a root $v$, and let $H$ be an induced subgraph obtained by deleting $v$ and their internal vertices, possibly together with other vertices. Suppose $H$ has a PCF $L$-coloring $\col$ and every other endpoint $x$ of these threads is non-isolated in $H$. Choose one witness $c_x\in\UU_\col(x,H)$ for each distinct endpoint $x$. Assign a color $\alpha$ to $v$, and on each thread choose a color for its neighbor of $v$ as in Lemma~\ref{lem:two-thread} or Lemma~\ref{lem:three-thread}, using the same $c_x$ for all threads ending at $x$. Then all these threads can be completed simultaneously, with every path edge properly colored, every internal $2$-vertex conflict-free, and every chosen witness preserved at its endpoint.
\end{lemma}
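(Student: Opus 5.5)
The plan is to argue that the completions given by Lemma~\ref{lem:two-thread} and Lemma~\ref{lem:three-thread} are \emph{local}: they only need the colors $\alpha$, $\col(u)$, $p=\col(x)$ and the fixed witness $c_x$. They never read the colors of other newly colored vertices. So all threads can be completed independently, and we then check that no interaction between threads creates a conflict.

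\textbf{Step 1: separation of the threads.} Each thread in $\mathcal B$ is maximal, and its internal vertices are $2$-vertices. Hence the internal vertices of distinct threads are pairwise disjoint and nonadjacent. An internal vertex has only its two path neighbors. Each of these is either another internal vertex of the same thread, or $v$, or the other endpoint $x$.

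A thread might have both boundary vertices equal to $v$, or its other endpoint might be a root of a different type. I will handle this using $g(G)\ge7$ and the hypothesis that $x$ is non-isolated in $H$, which places $x$ in $H$. A thread whose two ends are both $v$ would form a cycle of length at most $4$, which contradicts the girth. So every other endpoint $x$ lies in $H$, and $x\ne v$.

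\textbf{Step 2: completion thread by thread.} The color $\alpha$ of $v$ and the color already chosen for the neighbor $u$ of $v$ are given. On each thread, carry out the remaining choices exactly as in the proof of the corresponding lemma. For a $2$-thread, choose $\col(w)$ from $B\setminus\{a,\alpha\}$ or set $\col(w)=\beta$. For a $3$-thread, choose $\col(z)\in L(z)\setminus\{\alpha,\col(u),p\}$ and then $\col(w)\in W\setminus\{\col(z)\}$.

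These choices involve only $\alpha$, $p$, $c_x$ and colors on the same thread. By Step 1, the neighborhood of each internal vertex lies inside its own thread together with $\{v,x\}$. Therefore properness on the path edges and conflict-freeness at $u,z,w$ hold for exactly the reasons given in those lemmas, whatever happens on the other threads.

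\textbf{Step 3: the witnesses at the endpoints.} This is the only genuinely simultaneous point, and I expect it to be the main obstacle. Several threads may end at the same $x$. The vertex $x$ keeps its $H$-neighbors, so the count of the color $c_x$ among the $H$-neighbors of $x$ is still exactly one. Every new neighbor of $x$ is the last vertex $w$ of some thread, and on every thread ending at $x$ we chose $\col(w)\ne c_x$ using the same $c_x$. So no new neighbor has color $c_x$, and $c_x$ still occurs exactly once in $N(x)$.

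Properness at $x$ also holds, since $\col(w)\ne p$ in each lemma. Finally, $v$ itself may be a neighbor of $x$, or $v$ may lie in $H$-neighborhoods. This causes no problem: $v$ is deleted, so it is not an $H$-vertex. Its effect on $x$ is relevant only when $x$ is adjacent to $v$, and the statement explicitly leaves the conditions at $v$ and its other neighbors to the applications. The same $c_x$ is therefore preserved at every endpoint, which completes the argument.
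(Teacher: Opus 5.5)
Your proposal is correct and matches the paper's proof. Both arguments rest on the same points: the thread interiors are disjoint and nonadjacent; each thread is completed locally by Lemma~\ref{lem:two-thread} or Lemma~\ref{lem:three-thread}; and the common witness $c_x$ survives because every restored neighbor of $x$ avoids both $\varphi(x)$ and $c_x$. Your extra girth check is redundant but harmless, since $x$ being non-isolated in $H$ already places $x$ in $H$ and so gives $x\ne v$.
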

\begin{proof}
The internal vertices of different threads are disjoint and have degree $2$, so there are no edges between the interiors of different threads. Complete each thread by the corresponding lemma. Every restored neighbor of an endpoint $x$ receives a color different from both $\col(x)$ and $c_x$. Hence all incident thread edges at $x$ are properly colored, and the single occurrence of $c_x$ in $N_H(x)$ remains unique. All conditions at internal vertices involve only the colors on their own thread, so the extensions can be made simultaneously.
\end{proof}

\begin{lemma}[Singleton selection]\label{lem:singleton}
Suppose the colors of at most three neighbors of a vertex $v$ have been fixed, and at least one neighbor remains uncolored. If each uncolored neighbor has at least two available colors, then colors can be chosen for them so that some color occurs exactly once in $N(v)$.
\end{lemma}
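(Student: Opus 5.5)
Let $S$ be the multiset of colors on the fixed neighbors, so $|S|\le3$, and let $u_1,\dots,u_k$ ($k\ge1$) be the uncolored neighbors, each with an available set $A_i$, $|A_i|\ge2$. The goal is a color for each $u_i$ from $A_i$ such that some color occurs exactly once in the full multiset on $N(v)$. The plan is a case analysis on the multiplicity pattern of $S$, using a greedy strategy. The idea is to make one uncolored neighbor, say $u_1$, or one fixed color, the singleton, and then steer the remaining $u_i$ away from it. The difficulty is that all $u_i$ may have the same two-element list, so any color given to $u_1$ could be forced onto others.

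First I will reduce to the case $k=1$. If $k\ge2$, I would color $u_2,\dots,u_k$ one at a time and then finish with $u_1$. The strategy is to pick a color for $u_1$ first and then give each other $u_i$ a color different from it; this is always possible since $|A_i|\ge2$. After that, $u_1$'s color occurs exactly once among the uncolored neighbors. The only remaining danger is that the same color also appears in $S$. So choose $\gamma\in A_1$ not appearing in $S$ if possible, give $u_1$ the color $\gamma$, and color every other $u_i$ with a color from $A_i\setminus\{\gamma\}$. Then $\gamma$ occurs exactly once in $N(v)$.

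It remains to handle the case $A_1\subseteq \operatorname{supp}(S)$. Since $|A_1|\ge2$ and $S$ has at most three entries, $S$ is either $\{a,b\}$ with distinct colors, $\{a,a,b\}$, or $\{a,b,c\}$ (all distinct), and $A_1$ contains two of its colors. In each case I would pick $\gamma\in A_1$ so that, after $u_1$ takes $\gamma$, some color of $S$ other than $\gamma$ that occurred once in $S$ still occurs exactly once, and the other $u_i$ must avoid that color.
\begin{itemize}
\item If $S$ has a color $b$ of multiplicity one and $A_1$ contains a color $\gamma\ne b$ from $S$, set $u_1=\gamma$ and color every other $u_i$ avoiding $b$. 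Then $b$ stays a singleton.
\item In the pattern $\{a,a,b\}$ with $A_1=\{a,b\}$, set $u_1=a$. Then $b$ is a singleton, and the others avoid $b$.
\item In the pattern $\{a,b,c\}$, any $\gamma\in A_1$ leaves the other two colors as singletons, and the others avoid one of them.
\end{itemize}
When all $u_i$ avoid a fixed color $b$ this is feasible because each $|A_i|\ge2$. This also covers $k=1$ directly.

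The main obstacle is making sure the case split is exhaustive. In particular, $S$ could have all its colors repeated, as in $\{a,a\}$ or $\{a,a,a\}$, and then $\operatorname{supp}(S)$ has a single color, so $A_1\not\subseteq\operatorname{supp}(S)$ and the first case applies. For $|S|\le 1$ the first case applies trivially as well. I would check these small patterns explicitly, and that completes the lemma.
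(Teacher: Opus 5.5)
Your proof is correct and uses the same two moves as the paper. Either every uncolored neighbor avoids a color that already occurs exactly once among the fixed colors, or one uncolored neighbor creates a new singleton and the rest avoid it; in both arguments, $|S|\le 3$ guarantees a fixed singleton whenever $S$ contains two distinct colors. The only differences are that you split on whether $A_1\not\subseteq\operatorname{supp}(S)$ where the paper splits on whether $S$ has a singleton, that your three bullets all reduce to the first one, and that the announced ``reduction to $k=1$'' is never actually used.
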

\begin{proof}
If a color $\gamma$ already occurs exactly once among the fixed colors, make every uncolored neighbor avoid $\gamma$. Otherwise, if any colors are fixed, all fixed colors must be equal, say to $\gamma$, since at most three neighbors are colored. Choose a color $\delta\ne\gamma$ for one uncolored neighbor and make the others avoid $\delta$. If no colors are fixed, choose any color $\delta$ for one neighbor and make the others avoid $\delta$. Each choice is possible because every uncolored neighbor has at least two available colors.
\end{proof}

\section{Structural reductions}\label{sec:reductions}

\begin{proposition}[Structural reductions]\label{prop:reductions}
The minimum counterexample $G$ satisfies all of the following.
\begin{enumerate}[label=\textnormal{(R\arabic*)}]
\item\label{red:mindeg} $\delta(G)\ge2$.
\item\label{red:3no2} No $3$-vertex is incident with a $2$-thread.
\item\label{red:no4} $G$ has no $4$-thread.
\item\label{red:3plus} Every $3^+$-vertex has a $3^+$-neighbor.
\item\label{red:n3} If $v$ is a $k$-vertex with $k\ge4$, then $v$ is
incident with at most $k-3$ maximal $3$-threads.
\item\label{red:saturated} If a $k$-vertex $v$ with $k\ge4$ has exactly $k-1$
$2$-neighbors, then at most $k-4$ of those neighbors lie on maximal
$3$-threads.
\item\label{red:3two} If a $3$-vertex has two $2$-neighbors, then the
other endpoint of each corresponding maximal $1$-thread is a
$4^+$-vertex.
\end{enumerate}
\end{proposition}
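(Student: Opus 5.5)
Each of (R1)--(R7) is proved by the same template. We delete a small configuration $S$ (a root vertex together with the interiors of some incident threads), take the induced subgraph $H=G-S$, and obtain a PCF $L$-coloring $\col$ of $H$ by minimality. We then extend $\col$ to $S$ using the local extension lemmas of Section~\ref{sec:branch}, which is a contradiction. The order in which we color $S$ is always the same. First we fix the witnesses $c_x$ at the other endpoints. Then we compute, for each branch at the root $v$, the set of admissible colors for the first vertex of that branch, using Lemmas~\ref{lem:one-four}, \ref{lem:one-three}, \ref{lem:two-thread} and~\ref{lem:three-thread}. Next we choose $\alpha=\col(v)$ from $L(v)$, avoiding the colors of its already-colored neighbors and the witnesses $c_y$ of those neighbors $y\in H$ that have $v$ as a new neighbor. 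Finally we use Lemma~\ref{lem:singleton} to make $v$ conflict-free, and Lemma~\ref{lem:shared-boundary} to complete all threads simultaneously. Because $g(G)\ge7$, the configurations we delete are trees hanging off $H$, and the boundary vertices are distinct from $v$ and from one another. This is not automatic everywhere: a thread could have both ends at $v$ only if it closes a cycle of length at least $7$, and that case must be treated separately, by cutting the thread in the middle. We also have to check that the boundary vertices are non-isolated in $H$. This follows from degrees at least $3$, and from (R1) once it has been established.

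(R1): a $1$-vertex $u$ with neighbor $x$ is deleted. After coloring $G-u$, we give $u$ a color avoiding $\col(x)$ and $c_x$; there are $3$ colors available and at most $2$ forbidden. If $x$ becomes isolated in $H$, then $G$ is a small tree and we handle it directly. (R3): for a $4$-thread $x-v_1-v_2-v_3-v_4-y$, we delete the thread. We color $v_1,v_2$ using the pattern of Lemma~\ref{lem:two-thread} from the $x$ side, and $v_3,v_4$ from the $y$ side. Each vertex has $4$ colors and each middle vertex has at most $4$ constraints, so a short counting or Hall-type check on $v_2,v_3$ suffices. (R2) and (R7): the root is a $3$-vertex $v$ with $|L(v)|=5$. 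For (R2), we delete $v$ together with the $2$-thread. Lemma~\ref{lem:two-thread} leaves at least one choice for $u$, and $v$ must avoid the colors and witnesses of its two other neighbors. This excludes at most $4$ colors plus the color $p$ needed for rigidity, so we choose $\alpha\notin B$ to make the branch flexible. Then $v$ is made conflict-free by Lemma~\ref{lem:three-neighbors} or by Lemma~\ref{lem:singleton}. (R7) is analogous, with Lemma~\ref{lem:one-three} used when the far endpoint is a $3$-vertex. (R4): a $3^+$-vertex all of whose neighbors are $2$-vertices; we delete $v$ and the incident thread interiors. By (R2) and (R3), in the case of degree $3$ all threads are $1$-threads.

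The degree-counting reductions (R5) and (R6) are the main obstacle. Here $v$ is a $k$-vertex with $|L(v)|=k+2$, and we delete $v$ and the interiors of all its threads. Each $3$-thread has a two-element control set $C_i$, each $2$-thread has a control set $B_i$, and the $3^+$-neighbors $y$ forbid $\col(y)$ and $c_y$. In (R5), $v$ has at most two $3^+$-neighbors, and these forbid at most $4$ colors. We must choose $\alpha$ so that enough branches stay flexible, that is, $\alpha\notin C_i$, for Lemma~\ref{lem:singleton} to apply, and even rigid branches still leave one choice. I would do the counting as follows. The forbidden set has size at most $2(k-m)$, where $m$ is the number of $2$-neighbors. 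With $m\ge k-2$, at least $k+2-4$ colors remain for $\alpha$. Lemma~\ref{lem:singleton} requires every branch but at most three to be flexible. So I would pick $\alpha$ outside as many of the sets $C_i$ as possible, using the pigeonhole principle on the multiplicities of colors among the $C_i$. When the pigeonhole fails, I would use the rigid singletons themselves as forced colors and arrange a unique color around them. (R6) is the same argument with exactly one $3^+$-neighbor, where the $1$-thread and $2$-thread branches give extra flexibility. The delicate step is verifying that a singleton color at $v$ survives when several rigid branches are forced to the same color. I expect a small case analysis on how many control sets contain the chosen $\alpha$.
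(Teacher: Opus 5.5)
Your template (delete the root with its thread interiors, color $H$, choose $\alpha=\col(v)$ last, then apply Lemma~\ref{lem:singleton}) cannot prove (R4), and the proposal gives no other argument for it.

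Suppose every neighbor $u_i$ of $v$ lies on a $1$-thread $v-u_i-x_i$. Conflict-freeness at $u_i$ forces $\alpha\ne\col(x_i)$ for every $i$. This may leave only two colors, say $L(v)\setminus\{\col(x_1),\ldots,\col(x_k)\}=\{1,2\}$. If also $L(u_i)=\{1,2,\col(x_i),c_{x_i}\}$ for every $i$, then $\alpha=1$ forces every $u_i$ to be $2$, and vice versa. All neighbors of $v$ then share one color, and unlike in (R5)--(R6) there is no $3^+$-neighbor to supply a unique color.

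The missing idea is to fix $\alpha$ \emph{before} coloring $H$:
\begin{itemize}
\item Choose $\alpha$ so that $U_\alpha=\{u_i:\alpha\in L(u_i)\}$ has at most $\min\{3,k-1\}$ elements. For $k=3$ take $\alpha\notin L(u_1)$; for $k\ge4$ use $\sum_c|U_c|\le 4k$ over $k+2$ colors.
\item Apply minimality to $H=G-\{u_0,\ldots,u_k\}$ with the modified lists $L'(x_i)=L(x_i)\setminus\{\alpha\}$. This is legitimate because each $x_i$ has lost a neighbor.
\end{itemize}
Now $\col(x_i)\ne\alpha$ holds automatically. Every $u_i\notin U_\alpha$ keeps two colors avoiding $\col(x_i)$ and $c_{x_i}$, and these also avoid $\alpha$. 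So Lemma~\ref{lem:singleton} applies.

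Your (R2) also fails as written, because you delete the $3$-vertex $v$. Then $\alpha$ must avoid $\col(a),c_a,\col(b),c_b$, which may leave only one color of $L(v)$, so you cannot also require $\alpha\notin B$. The rigid case is genuinely bad. Take $\col(a)=\col(b)=\gamma$, $L(v)=\{\gamma,c_a,c_b,1,2\}$, $L(u)=\{1,2,\gamma,p\}$ and $L(w)=\{1,2,p,c_x\}$. Then $\alpha\in\{1,2\}$, and $u$ must avoid $\alpha,\gamma,p$, so it takes the other color of $\{1,2\}$. Finally $w$ must avoid $p,c_x,\alpha,\col(u)$, which exhausts $L(w)$.

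The paper instead keeps the $3$-vertex in $H=G-\{v_1,v_2\}$. There it has degree $2$, so conflict-freeness in $H$ already gives its two remaining neighbors distinct colors. Lemma~\ref{lem:three-neighbors} then finishes after greedily coloring $v_2$ and then $v_1$.

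Your template does handle (R7), and with a careful count (using (R5)) also (R6). Two other steps remain unresolved:
\begin{itemize}
\item In (R5) you must settle the tight case $|R|=k-2$ with every color of $R$ in two control sets. There any $\alpha$ makes two branches rigid, so four neighbor colors are fixed and Lemma~\ref{lem:singleton} is unavailable. The paper notes that equality forces $\col(a),c_a,\col(b),c_b$ to be pairwise distinct. Hence $\col(a)\ne\col(b)$, while all first colors lie in $R$, which excludes $\col(a)$. So $\col(a)$ occurs exactly once.
\item (R3) is not a pure counting step, since $v_2$ and $v_3$ each face four constraints with four colors. The paper first colors $v_2$ outside a two-element set reserved for $v_1$.
\end{itemize}
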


We prove (R1)--(R7) in order.

\begin{proof}[Proof of Proposition~\ref{prop:reductions}]
\textbf{(R1).}
Suppose first that $u$ is a $1$-vertex with neighbor $x$.
If $G=K_2$, choose distinct colors on its two vertices and we are done.
Otherwise $x$ has a neighbor other than $u$ (since $G$ is connected), so $x$ is non-isolated in $G-u$.  Color $G-u$ by minimality and fix
$c_x\in\UU_\col(x,G-u)$.  Since $|L(u)|=3$, choose
\[
  \col(u)\in L(u)\setminus\{\col(x),c_x\}.
\]
The old witness at $x$ is preserved, while the unique neighbor of $u$
is itself a conflict-free witness for $u$.  An isolated vertex is even
more immediate.  Thus $\delta(G)\ge2$.

\medskip
\noindent\textbf{(R2).}
Suppose a $3$-vertex $u$ is incident with a $2$-thread
\[
  u-v_1-v_2-x,
\]
where $v_1,v_2$ are $2$-vertices.  Color
$H=G-\{v_1,v_2\}$ by minimality.  The vertex $x$ is non-isolated in
$H$, since it loses only $v_2$ and $d_G(x)\ge2$; fix
$c_x\in\UU_\col(x,H)$.  Choose
\[
 \col(v_2)\in L(v_2)\setminus
 \{\col(u),\col(x),c_x\}
\]
and then
\[
 \col(v_1)\in L(v_1)\setminus
 \{\col(v_2),\col(u),\col(x)\}.
\]
Both choices exist since the two lists have size $4$.
The vertices $v_1,v_2$ are conflict-free because their two neighbors
receive different colors.  The witness $c_x$ is preserved at $x$.
Finally, in $H$ the vertex $u$ has degree $2$, so its two neighbors in
$H$ have distinct colors; after $v_1$ is colored and the edge $uv_1$ is restored, at least two colors
still occur in $N_G(u)$, and Lemma~\ref{lem:three-neighbors} applies.
This is a contradiction.

\medskip
\noindent\textbf{(R3).}
Suppose $v_1v_2v_3v_4$ is a $4$-thread, with outside neighbors $x$ of
$v_1$ and $y$ of $v_4$.  The equality $x=y$ would create a $5$-cycle,
so $x\ne y$.  Color
$H=G-\{v_1,v_2,v_3,v_4\}$ by minimality.  Both $x$ and $y$ are
non-isolated, because each loses only one neighbor and $\delta(G)\ge2$.
Fix witnesses $c_x$ and $c_y$.
Set
\begin{align*}
 A_1&=L(v_1)\setminus\{\col(x),c_x\},&
 A_2&=L(v_2)\setminus\{\col(x)\},\\
 A_3&=L(v_3)\setminus\{\col(y)\},&
 A_4&=L(v_4)\setminus\{\col(y),c_y\}.
\end{align*}
Then $|A_1|,|A_4|\ge2$ and $|A_2|,|A_3|\ge3$.
Shrink $A_1$ to two colors.  Choose successively
\[
 \col(v_2)\in A_2\setminus A_1,
 \qquad
 \col(v_4)\in A_4\setminus\{\col(v_2)\},
\]
\[
 \col(v_3)\in A_3\setminus
    \{\col(v_2),\col(v_4)\},
 \qquad
 \col(v_1)\in A_1\setminus\{\col(v_3)\}.
\]
The coloring is proper.  Moreover, the two neighbors of each $v_i$
have distinct colors: for $v_1$ this follows from $\col(v_2)\ne\col(x)$;
for $v_2$ from $\col(v_1)\ne\col(v_3)$; for $v_3$ from
$\col(v_2)\ne\col(v_4)$; and for $v_4$ from
$\col(v_3)\ne\col(y)$.  The witnesses at $x,y$ are preserved.
Thus the coloring extends to $G$, a contradiction.

By (R1), connectedness, and the fact that $G$ is not a cycle, every maximal thread now has boundary vertices of degree at least $3$. By (R3), it has at most three $2$-vertices. Its two boundary vertices must be distinct, since otherwise it would form a cycle of length at most $4$.

\medskip
\noindent\textbf{(R4).}
Suppose that a $k$-vertex $u_0$, $k\ge3$, has only $2$-neighbors
$u_1,\ldots,u_k$.  For each $i$, let $x_i$ be the other neighbor of
$u_i$.  Girth at least $7$ implies that the vertices $x_i$ are pairwise
distinct and that no $x_i$ is one of the $u_j$: otherwise there is a
$3$- or $4$-cycle.

Since $d_G(u_i)=2$, we have $|L(u_i)|=4$ for every $i$.
For $c\in L(u_0)$ put
\[
 U_c=\{u_i:c\in L(u_i)\}.
\]
There is a color $\alpha\in L(u_0)$ with
\[
 |U_\alpha|\le\min\{3,k-1\}.
\]
Indeed, if $k=3$, choose
$\alpha\in L(u_0)\setminus L(u_1)$; if $k\ge4$, then
\[
 \sum_{c\in L(u_0)}|U_c|\le4k
\]
and hence some $\alpha$ satisfies
\[
 |U_\alpha|\le
 \left\lfloor\frac{4k}{k+2}\right\rfloor\le3.
\]

Let $H=G-\{u_0,u_1,\ldots,u_k\}$.  Each $x_i$ is non-isolated in
$H$: it has degree at least $2$ in $G$, and the only deleted neighbor it
can have is $u_i$.  Indeed, an edge $x_i u_0$ would create a triangle,
and an edge $x_i u_j$ with $j\ne i$ would create a $4$-cycle.  Define lists on $H$ by
\[
 L'(x_i)=L(x_i)\setminus\{\alpha\}
\]
for every $i$, and $L'(z)=L(z)$ otherwise.  Since each $x_i$ loses one
neighbor,
\[
 |L'(x_i)|\ge d_H(x_i)+2,
\]
so minimality gives a proper conflict-free $L'$-coloring $\col$ of
$H$.  Fix $c_i\in\UU_\col(x_i,H)$.

Set $\col(u_0)=\alpha$. For each $u_i\in U_\alpha$, choose
\[
  \col(u_i)\in L(u_i)\setminus\{\col(x_i),c_i,\alpha\}.
\]
There are at most three such vertices. For every remaining $u_i$, the set $L(u_i)\setminus\{\col(x_i),c_i\}$ contains at least two colors, none equal to $\alpha$. Since $|U_\alpha|\le k-1$, at least one neighbor of $u_0$ remains uncolored. Lemma~\ref{lem:singleton} therefore allows us to color the remaining neighbors so that $u_0$ is conflict-free.

Each $u_i$ receives a color different from $\alpha$, $\col(x_i)$, and $c_i$. Moreover, $\col(x_i)\ne\alpha$ by the definition of $L'$. Thus the extension is proper, every $u_i$ is conflict-free, and each witness $c_i$ is preserved. This gives a PCF $L$-coloring of $G$, a contradiction.

\medskip
\noindent\textbf{(R5).}
Suppose a $k$-vertex $u_0$, $k\ge4$, is incident with at least $k-2$ maximal
$3$-threads.  Select $k-2$ of them and write these as
\[
 u_0-u_i-v_i-w_i-x_i,
 \qquad 1\le i\le k-2,
\]
and let $a,b$ be the two remaining neighbors of $u_0$. Set
\[
 H=G-\left(\{u_0\}\cup
       \bigcup_{i=1}^{k-2}\{u_i,v_i,w_i\}\right).
\]
Any unselected threads are retained in $H$. By (R2)--(R3), each $x_i$ is a $4^+$-vertex. By (R4), it has a $3^+$-neighbor other than $u_0$, since $x_i u_0$ would create a $5$-cycle. This neighbor belongs to $H$, so $x_i$ is non-isolated in $H$.

The girth condition excludes $a,b$ from the selected threads and their other endpoints. It also excludes edges from $a$ or $b$ to the deleted internal vertices: such an edge would give a cycle of length at most $6$. Thus $a$ and $b$ each lose only $u_0$ and remain non-isolated by (R1), whether or not they have degree $2$ in $G$. Color $H$ by minimality, and fix witnesses at $a,b$ and at each distinct endpoint $x_i$. Put
\[
 R=L(u_0)\setminus
 \{\col(a),c_a,\col(b),c_b\}.
\]
Thus $|R|\ge k-2$. For each selected $3$-thread, take a two-element control set $C_i$ as in Lemma~\ref{lem:three-thread}, using a common witness whenever endpoints coincide. Let $N=k-2$.

If $|R|\ge N+1$, the $N$ control sets have at most $2N$ incidences
with colors of $R$, so some $\alpha\in R$ belongs to at most one
control set.  The same conclusion holds if $|R|=N$ but some
$\alpha\in R$ has incidence at most one.  Set $\col(u_0)=\alpha$.
Then at most one thread is rigid.  The two colors on $a,b$, together
with the first color on a possible rigid thread, give at most three
fixed neighbor colors, while at least one thread is flexible. Lemma~\ref{lem:singleton} chooses the first colors on the flexible threads so that $u_0$ is conflict-free. All these colors differ from $\alpha$.

Otherwise, $|R|=N$ and every color of $R$ lies in at least two control sets. Since there are exactly $N$ two-element control sets, each is contained in $R$, and every color of $R$ has incidence exactly two. Equality in $|R|\ge k+2-4=k-2$ also implies that $\col(a),c_a,\col(b),c_b$ are pairwise distinct members of $L(u_0)$. In particular, $\col(a)\ne\col(b)$ and $\col(a),\col(b)\notin R$. Choose any $\alpha\in R$ for $u_0$ and, on each $3$-thread, a first color in $C_i\setminus\{\alpha\}$. These first colors lie in $R$, so $\col(a)$ occurs exactly once in $N_G(u_0)$.

In both cases, Lemmas~\ref{lem:three-thread} and~\ref{lem:shared-boundary} complete the selected threads. Since $\alpha\in R$, restoring $u_0$ is proper at $a,b$ and preserves their witnesses. The resulting PCF $L$-coloring of $G$ is a contradiction. Thus (R5) holds.

\medskip
\noindent\textbf{(R6).}
Suppose that a $k$-vertex $u_0$, $k\ge4$, has exactly $k-1$
$2$-neighbors and that at least $k-3$ of them lie on maximal $3$-threads.
Select such neighbors $u_1,\ldots,u_{k-3}$, label the two remaining
$2$-neighbors $u_{k-2},u_{k-1}$, and write the selected $3$-threads as
\[
 u_0-u_i-v_i-w_i-x_i,
 \qquad 1\le i\le k-3.
\]
Let $u_k$ be the unique neighbor of $u_0$ that is not a $2$-vertex, and for $i\in\{k-2,k-1\}$ let $y_i$ be the neighbor of $u_i$ other than $u_0$. Define
\[
 D=\{u_0,u_1,\ldots,u_{k-1}\}\cup
   \{v_i,w_i:1\le i\le k-3\},
 \qquad H=G-D.
\]

By (R1), $d_G(u_k)\ge3$, whereas every vertex of $D\setminus\{u_0\}$ has degree $2$. Thus $u_k\notin D$. An edge from $u_k$ to $u_i,v_i$, or $w_i$ on a selected $3$-thread would form a cycle of length at most $5$ through $u_0$; an edge to $u_{k-2}$ or $u_{k-1}$ would form a triangle. Hence $u_k$ loses only $u_0$, and $d_H(u_k)\ge2$.

Fix $i\in\{k-2,k-1\}$, and let $j$ be the other index in this set. The vertex $y_i$ is outside $D$: equality with $u_h,v_h$, or $w_h$ on a selected $3$-thread gives a cycle of length $3,4$, or $5$, respectively, and equality with $u_j$ gives a triangle. The vertices $y_{k-2},y_{k-1}$ are distinct from each other, from $u_k$, and from every $x_h$, since these identifications give cycles of length at most $6$. Also $x_h\ne u_k$, since otherwise the path $u_0-u_h-v_h-w_h-x_h$ and the edge $u_0u_k$ give a $5$-cycle. If $y_i$ had a neighbor in $D$ other than $u_i$, the edge to that neighbor, together with $y_i u_i u_0$ and the relevant path from $u_0$, would give a cycle of length at most $6$. Thus $y_i$ loses only $u_i$ and is non-isolated by (R1).

Each $x_h$ is a $4^+$-vertex by (R2)--(R3). By (R4), it has a $3^+$-neighbor different from $u_0$, since $x_hu_0$ would create a $5$-cycle. This neighbor is outside $D$, so $x_h$ is non-isolated in $H$. Color $H$ by minimality. Fix witnesses at $u_k,y_{k-2},y_{k-1}$ and at each distinct $x_h$, using one common witness when several $x_h$ coincide.

Define
\[
 L_0=L(u_0)\setminus
 \{\col(u_k),c_{u_k},\col(y_{k-2}),\col(y_{k-1})\},
\]
so $|L_0|\ge k-2$.  For $1\le i\le k-3$, construct a two-element
control set $C_i$ for the $3$-thread as in
Lemma~\ref{lem:three-thread}.  For $i=k-2,k-1$, put
\[
 C_i\subseteq
 L(u_i)\setminus\{\col(y_i),c_{y_i}\},
 \qquad |C_i|=2.
\]

Fix $\alpha\in L_0$ and let
\[
 U_\alpha=\{u_i: \alpha\in C_i,
                 1\le i\le k-1\}.
\]
If $|U_\alpha|\ge3$, color every vertex of $U_\alpha$ with
$\alpha$, and for each remaining $u_i$ choose a color in
$C_i\setminus\{\col(u_k)\}$.  At most $k-3$ distinct colors now
occur on $u_1,\ldots,u_{k-1}$, whereas $|L_0|\ge k-2$; choose
\[
 \col(u_0)\in
 L_0\setminus\{\col(u_1),\ldots,\col(u_{k-1})\}.
\]
No $u_i$ has color $\col(u_k)$, so $\col(u_k)$ is unique in
$N_G(u_0)$.  For each $i\le k-3$, we have $\col(u_i)\in C_i$ and $\col(u_i)\ne\col(u_0)$, as required by Lemma~\ref{lem:three-thread}.

Now suppose $|U_\alpha|\le2$. Set $\col(u_0)=\alpha$, and for each $u_i\in U_\alpha$ choose $\col(u_i)\in C_i\setminus\{\alpha\}$. At most three neighbors of $u_0$ are now colored: $u_k$ and the vertices in $U_\alpha$. At least one of $u_1,\ldots,u_{k-1}$ remains uncolored, since $k\ge4$. For every such vertex $u_i$, we have $\alpha\notin C_i$, so both colors of $C_i$ are available. Apply Lemma~\ref{lem:singleton} to choose their colors. Then $u_0$ is conflict-free and each $u_i$ avoids $\alpha$.

Complete the selected $3$-threads by Lemmas~\ref{lem:three-thread} and~\ref{lem:shared-boundary}. For $i=k-2,k-1$, the chosen color of
$u_i$ avoids both $\col(y_i)$ and $c_{y_i}$, while
$\col(u_0)\ne\col(y_i)$ by the definition of $L_0$; hence the
coloring is proper, $u_i$ is conflict-free, and the witness at $y_i$
is preserved.  This gives a proper conflict-free coloring of $G$, a
contradiction.  Thus (R6) holds.

\medskip
\noindent\textbf{(R7).}
Let $u_0$ be a $3$-vertex with two $2$-neighbors $u_1,u_2$, and let
$u_3$ be its third neighbor.  For $i=1,2$, let $x_i$ be the other
neighbor of $u_i$.  By (R2), $d_G(x_i)\ge3$.  Suppose, by symmetry,
that $d_G(x_1)=3$. Let $H=G-\{u_0,u_1,u_2\}$.

By girth at least $7$, the vertices $u_3,x_1,x_2$ are pairwise
distinct.  By (R4), $u_3$ is a $3^+$-vertex, since it is the only
neighbor of $u_0$ that is not a $2$-vertex.  Moreover, $u_3$ is adjacent to neither $u_1$ nor $u_2$, and $x_i$ is adjacent to neither $u_0$ nor $u_{3-i}$: any such edge would create a cycle of length at most $4$.  Hence each of $u_3,x_1,x_2$ loses exactly one neighbor when $u_0,u_1,u_2$ are deleted.  Since $d_G(u_3),d_G(x_1),d_G(x_2)\ge3$, all three vertices are non-isolated in $H$. Color $H$ by minimality.  Fix witnesses $c_{u_3}$ and
$c_{x_2}$.

Choose
\[
 \col(u_0)\in
 L(u_0)\setminus
 \{\col(u_3),c_{u_3},\col(x_1),\col(x_2)\},
\]
then
\[
 \col(u_2)\in
 L(u_2)\setminus
 \{\col(u_0),\col(x_2),c_{x_2}\},
\]
and finally
\[
 \col(u_1)\in
 L(u_1)\setminus
 \{\col(u_0),\col(x_1),\col(u_2)\}.
\]
All choices exist.  The vertices $u_1,u_2$ are conflict-free because
$\col(u_0)$ was chosen different from $\col(x_1)$ and
$\col(x_2)$.  The witness at $u_3$ and the witness at $x_2$ are
preserved.  The two neighbors of $x_1$ already present in $H$ have
distinct colors, since $x_1$ has degree $2$ in $H$ and is
conflict-free there; after $u_1$ is restored, Lemma~\ref{lem:three-neighbors}
applies at $x_1$.  Finally $\col(u_1)\ne\col(u_2)$, so the same
lemma applies at $u_0$.  This contradiction proves (R7).
\end{proof}

We shall also use the following separation consequence of the girth
assumption.

\begin{lemma}[Branch separation]\label{lem:separation}
Let $v$ be a $3^+$-vertex of $G$. Distinct maximal threads incident with $v$ have distinct neighbors of $v$ and disjoint sets of internal vertices. If a maximal $s$-thread and a maximal $t$-thread share their other endpoint, then $(s,t)$ is $(2,3)$, $(3,2)$, or $(3,3)$. No other endpoint of such a thread is adjacent to $v$.
\end{lemma}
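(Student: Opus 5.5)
The whole argument is a cycle-length count, using girth at least $7$ together with the facts established right after (R3). Every maximal thread has two distinct boundary vertices of degree at least $3$ and contains at most three $2$-vertices. By (R2), a thread incident with a $3$-vertex has length $1$ or $3$, and by (R4) the boundary vertices of such threads lie in the setting of the reductions. Fix a $3^+$-vertex $v$ and consider two distinct maximal threads incident with $v$: $v-u_1-\cdots-u_s-x$ and $v-w_1-\cdots-w_t-y$.

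\textbf{Distinct first neighbors and disjoint interiors.} Suppose the two threads share an internal vertex. Since every internal vertex has degree $2$, walking along the shared $2$-vertices shows that the two maximal threads coincide as paths of $2$-vertices. Two distinct threads incident with $v$ could then only arise from a single thread whose two boundary vertices both equal $v$. That is excluded, because the boundary vertices are distinct. In particular $u_1\neq w_1$. If $u_1=w_1$, the vertex $u_1$ is a $2$-vertex in both threads, so the threads share an interior and coincide, a contradiction. Hence distinct threads have distinct neighbors of $v$ and disjoint sets of internal vertices.

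\textbf{Shared other endpoint.} Suppose $x=y$. Then the two threads together form a cycle $v\,u_1\cdots u_s\,x\,w_t\cdots w_1\,v$ of length $s+t+2$. The interiors are disjoint and $x\neq v$, so this is a genuine cycle. Girth at least $7$ forces $s+t\ge5$. Since $s,t\le3$ by (R3), the only possibilities are $(2,3)$, $(3,2)$ and $(3,3)$.

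\textbf{Other endpoint not adjacent to $v$.} If $x$ were adjacent to $v$, then the path $v-u_1-\cdots-u_s-x$ together with the edge $xv$ would form a cycle of length $s+2\le5$. This edge is not one of the thread's edges, because $x\neq v$ and $x$ is not $u_1$, the latter being a $2$-vertex while $d_G(x)\ge3$. A cycle of length at most $5$ contradicts girth at least $7$.

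The only point needing care is the first part: one must check that maximality and the distinctness of boundary vertices really force a shared interior to mean identical threads. This follows because a maximal run of $2$-vertices is determined by any one of its vertices.
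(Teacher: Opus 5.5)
Your proof is correct and follows essentially the same route as the paper. A maximal run of $2$-vertices is determined by any one of its vertices, which gives distinct first neighbors and disjoint interiors; the cycle lengths $s+t+2\ge 7$ and $t+2\le 5$, compared with girth at least $7$, then give the other two claims (your opening remarks about (R2) and (R4) are not needed).
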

\begin{proof}
Once the neighbor of $v$ on a thread is fixed, all subsequent edges through $2$-vertices are determined. Thus two distinct maximal threads cannot have the same neighbor of $v$ or share an internal vertex. By (R3), $s,t\in\{1,2,3\}$. If their other endpoints coincide, the two paths form a cycle of length $s+t+2$. Since the girth is at least $7$, the stated pairs are the only possibilities. An edge from $v$ to the other endpoint of a maximal $t$-thread would form a cycle of length $t+2\le5$.
\end{proof}

\begin{lemma}[Non-isolated boundary vertices]\label{lem:boundary-survival}
Let $v$ be a $4^+$-vertex, and let $H$ be obtained by deleting $v$ and all $2$-vertices on maximal threads incident with $v$. Every $3^+$-neighbor of $v$ and every other endpoint of an incident maximal thread is non-isolated in $H$.
\end{lemma}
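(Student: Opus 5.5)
The plan is to bound, for each relevant vertex, which of its neighbors lie in the deleted set
\[
 D=\{v\}\cup\{\text{$2$-vertices on maximal threads incident with }v\},
\]
and then to show that enough neighbors remain outside $D$. The structural input is the girth assumption $g(G)\ge7$, used exactly as in the proofs of (R5) and (R6), together with Lemma~\ref{lem:separation} and (R1)--(R4).

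\emph{$3^+$-neighbors.} Let $u$ be a $3^+$-neighbor of $v$. No vertex of $D$ other than $v$ is adjacent to $u$. Suppose $u$ were adjacent to an internal vertex $w$ of a maximal $t$-thread incident with $v$. Then the edge $uw$, the edge $vu$, and the subpath of the thread from $v$ to $w$ would form a cycle. This subpath has at most $t\le3$ edges by (R3), so the cycle has length at most $t+2\le5$, contradicting girth $7$. Note that $u$ itself is not in $D$, since it has degree at least $3$. Hence $d_H(u)=d_G(u)-1\ge2$, and $u$ is non-isolated in $H$.

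\emph{Other endpoints of threads.} Let $x$ be the other endpoint of an incident maximal $t$-thread. By the remark after (R3), $x$ is a $3^+$-vertex distinct from $v$. By Lemma~\ref{lem:separation}, $x$ is not adjacent to $v$. Every neighbor of $x$ in $D$ is then an end $2$-vertex of some maximal thread from $v$ to $x$, since an internal vertex of a thread has both neighbors on the thread or at $v$. By Lemma~\ref{lem:separation}, at most two threads from $v$ can end at $x$. Two such threads would close a cycle of length $s+t+2$, which girth $7$ permits only for $(s,t)\in\{(2,3),(3,2),(3,3)\}$. Three threads ending at $x$ would give at least two such cycles but impose no extra contradiction, so I would avoid relying on that count. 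Instead I would use (R4) directly: $x$ has a $3^+$-neighbor $y$. This $y$ is not a $2$-vertex, so $y\notin D\setminus\{v\}$, and $y\ne v$ because $x$ is not adjacent to $v$. Hence $y\in V(H)$ and $x$ is non-isolated in $H$, exactly as in the (R5) argument.

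The only delicate point I expect is confirming that every vertex of $D$ adjacent to $x$ really is a thread vertex and that $y$ avoids $D$. Both follow from the degree argument: every vertex of $D$ other than $v$ has degree $2$, while $y$ has degree at least $3$. So the main obstacle reduces to citing (R4) and the non-adjacency of $x$ and $v$ correctly.
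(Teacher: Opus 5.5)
Your proof is correct and follows essentially the paper's argument. A short-cycle argument (equivalently, Lemma~\ref{lem:separation}) shows that a direct $3^+$-neighbor loses only $v$, and (R4) gives each thread endpoint $x$ a $3^+$-neighbor $y\ne v$ that is not deleted. The only difference is that you apply (R4) uniformly to all thread lengths, whereas the paper handles endpoints of $1$-threads by a degree count: such an endpoint is shared with no other thread, so it loses exactly one neighbor. Your aside that at most two threads from $v$ can end at $x$ is false, since Lemma~\ref{lem:separation} allows several $3$-threads (and one $2$-thread) to share an endpoint; as you yourself note, the argument never uses it.
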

\begin{proof}
Let $D$ denote the deleted set. Each vertex of $D\setminus\{v\}$ has degree $2$, with both neighbors specified by its thread. Therefore, an edge from a vertex outside $D$ to a vertex of $D\setminus\{v\}$ must join an endpoint of a thread to its terminal $2$-vertex.

A direct $3^+$-neighbor $y$ of $v$ cannot be the other endpoint of any thread incident with $v$, by Lemma~\ref{lem:separation}. Hence $y$ loses only the edge $yv$, and therefore has degree at least $2$ after the deletion.

Let $x$ be the other endpoint of a maximal $1$-thread incident with $v$. By Lemma~\ref{lem:separation}, a maximal $1$-thread cannot share its other endpoint with another thread incident with $v$. Thus $x$ loses only the terminal neighbor on its own branch, and $d_G(x)\ge3$, so it remains non-isolated.

Finally let $x$ be the endpoint of a maximal $2$- or $3$-thread incident with $v$. In either case $x$ is a $4^+$-vertex: a $3$-vertex at $x$ would be incident with a $2$-thread, contrary to Proposition~\ref{prop:reductions}\ref{red:3no2}. By Proposition~\ref{prop:reductions}\ref{red:3plus}, $x$ has a $3^+$-neighbor $z$. The vertex $z$ is not $v$, for otherwise the edge $xv$ together with the thread gives a cycle of length at most $5$. Since every vertex of $D\setminus\{v\}$ has degree $2$, the $3^+$-vertex $z$ is not deleted. Hence $x$ remains non-isolated even when several threads end at $x$.
\end{proof}

\section{The weighted thread lemma}\label{sec:weighted}
Let $v$ be a $k$-vertex, $k\ge4$. Define
\begin{align*}
 n_0&:=\#\{\text{maximal $1$-threads from $v$ to a $4^+$-vertex}\},\\
 n_1&:=\#\{\text{maximal $1$-threads from $v$ to a $3$-vertex}\},\\
 n_2&:=\#\{\text{maximal $2$-threads incident with $v$}\},\\
 n_3&:=\#\{\text{maximal $3$-threads incident with $v$}\},
\end{align*}
and let
\[
  r:=k-(n_0+n_1+n_2+n_3).
\]
Thus $r$ is the number of direct $3^+$-neighbors of $v$. Put
\[
  W(v):=n_0+\frac32n_1+2n_2+3n_3.
\]

\begin{lemma}[Weighted thread lemma]\label{lem:weighted}
For every $k$-vertex $v$ with $k\ge4$ in the minimum counterexample,
\[
  W(v)\le 3k-8.
\]
\end{lemma}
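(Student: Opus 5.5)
We argue by contradiction. Let $v$ be a $k$-vertex, $k\ge4$, with $W(v)\ge 3k-7$. Let $H$ be obtained by deleting $v$ and all $2$-vertices on maximal threads incident with $v$. By Lemma~\ref{lem:separation} these threads have distinct first vertices and disjoint interiors. By Lemma~\ref{lem:boundary-survival}, every direct $3^+$-neighbor $y$ and every other endpoint $x$ is non-isolated in $H$. Minimality gives a PCF $L$-coloring $\col$ of $H$, and we fix witnesses $c_y,c_x$, using one common witness at a shared endpoint. The goal is to choose $\alpha\in L(v)$ and first colors on all branches so that $v$ becomes conflict-free. The branches are then completed by Lemmas~\ref{lem:one-four}, \ref{lem:one-three}, \ref{lem:two-thread}, \ref{lem:three-thread} and~\ref{lem:shared-boundary}. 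This imitates the proofs of (R4)--(R6), which I expect to be special cases of the bound.

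\textbf{Constraints on $\alpha$.} First I list the constraints on $\alpha$ and their costs, which should explain the weights in $W$.
\begin{itemize}
\item A direct $3^+$-neighbor $y$ forbids $\col(y)$ and $c_y$, costing $2$ colors.
\item A $1$-thread to $x$ forbids $\col(x)$.
\begin{itemize}
\item If $d(x)\ge4$, the first vertex then has at least one choice, namely $L(u)\setminus\{\alpha,\col(x),c_x\}$.
\item If $d(x)=3$, it has at least two choices, by Lemma~\ref{lem:one-three}.
\end{itemize}
\item A $2$- or $3$-thread forbids nothing. It contributes two incidences of its control set $B$ or $C$, and it is rigid (one choice) exactly when $\alpha$ hits the control set.
\end{itemize}
Let $R\subseteq L(v)$ be the colors avoiding all hard prohibitions. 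Using $r=k-(n_0+n_1+n_2+n_3)$,
\[
|R|\ge k+2-2r-n_0-n_1 = n_0+n_1+2n_2+2n_3-k+2 .
\]
The hypothesis $W(v)\ge 3k-7$ will be rewritten as a lower bound on $|R|$ relative to the $2(n_2+n_3)$ control-set incidences. The aim is to show, by averaging as in (R5), that some $\alpha\in R$ lies in few control sets.

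\textbf{Making $v$ conflict-free.} Here I need a strengthened singleton lemma. Call a neighbor \emph{fixed} if it is a direct neighbor, a $1$-thread to a $4^+$-vertex, or a rigid branch. Call it \emph{flexible} if it has at least two choices: a $1$-thread to a $3$-vertex, or a non-rigid $2$- or $3$-thread. Then $v$ can be made conflict-free if either
\begin{itemize}
\item some color occurs exactly once among the fixed colors, and the flexible neighbors avoid it; or
\item some flexible neighbor can take a color $\delta$ absent from the fixed colors, and all other flexible neighbors avoid $\delta$.
\end{itemize}
Lemma~\ref{lem:singleton} is the case with at most three fixed neighbors.

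I would split into two cases:
\begin{enumerate}
\item[(i)] Some $\alpha\in R$ is rigid for at most one branch, and there are enough flexible branches to beat the fixed multiset.
\item[(ii)] The tight case: every color of $R$ has high incidence. Then, as in (R5), equality forces the forbidden colors $\col(y),c_y,\col(x)$ to be pairwise distinct members of $L(v)$, and the control sets to lie inside $R$. Choosing all first colors inside $R$ then leaves some $\col(y)$ or $\col(x)$ occurring exactly once in $N(v)$.
\end{enumerate}
The $1$-threads to $3$-vertices are the source of the weight $\tfrac32$. A flexible branch can also serve as a unique color, which should be worth a half unit in the count.

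\textbf{Expected obstacle.} The hardest step is the case analysis when many neighbors are fixed: $r$ direct neighbors plus $n_0$ one-threads can together exceed three. Pure averaging over $\alpha$ then does not directly give a singleton. There I would try an alternative first: rather than fixing $\alpha$ first, pick the witness neighbor of $v$ first. This means either designating one fixed neighbor whose color should be unique and forcing the others, including $\alpha$, off it, or designating one flexible branch. I would then redo the incidence count with that extra prohibition, checking that $W(v)\ge3k-7$ still leaves a valid $\alpha$. I expect the boundary cases $r\in\{0,1\}$ to need a separate argument like (R6), handled by the counting trick of coloring all of $U_\alpha$ with $\alpha$ when $|U_\alpha|\ge3$.
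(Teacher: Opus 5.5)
Your framework is the one the paper uses: delete $v$ and all thread interiors, pick $\alpha\in R$ by counting control-set incidences, finish with Lemma~\ref{lem:singleton}, and treat a tight case in which the excluded colors are distinct and the control sets lie in $R$. However, the proposal never supplies the step that links the hypothesis on $W(v)$ to the coloring, and the obstacle you flag is left unresolved. The missing idea is the identity
\[
  3k-8-W(v)=3r-8+2n_0+\tfrac32n_1+n_2,
\]
which follows from $k=r+n_0+n_1+n_2+n_3$. If $W(v)>3k-8$, the right side is negative. Since $r\ge1$ by (R4), the case $r=0$ needs no separate treatment, and negativity forces $r\in\{1,2\}$. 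For $r=1$, (R6) gives $n_3\le k-4$, hence $n_0+n_1+n_2\ge3$ and then $n_0\le1$. For $r=2$, (R5) gives $n_3\le k-3$, and only $(n_0,n_1,n_2)\in\{(0,0,1),(0,1,0)\}$ survive. In these configurations, after averaging, at most three neighbors of $v$ are fixed, so your ``expected obstacle'' never occurs; the only remaining difficulty is the tight case $|R|=N$ for $r=2$. Without (R5) and (R6) the obstacle does occur. For example, $r=1$, $n_0=2$, $n_3=k-3$ has $W(v)=3k-7$, and with one rigid branch it has four fixed neighbors whose colors may pair up. This is exactly the configuration (R6) removes, so (R4)--(R6) are inputs to the lemma, not special cases to be rederived. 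Relatedly, your plan to rewrite the hypothesis as a lower bound on $|R|$ cannot work. With $N=n_2+n_3$ one has $|R|\ge k+2-2r-n_0-n_1=N+2-r$ regardless of $W(v)$; the hypothesis acts only by restricting $(r,n_0,n_1,n_2,n_3)$.

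Two further points need fixing. First, since $W(v)\in\frac12\mathbb{Z}$, the negation of $W(v)\le3k-8$ is $W(v)\ge3k-\frac{15}{2}$, not $W(v)\ge3k-7$. Your assumption omits, for example, $r=2$, $n_1=1$, $n_3=k-3$ (the paper's case $A_2$), and $r=1$ with $(n_0,n_1,n_2)\in\{(1,1,1),(0,1,3),(0,3,0)\}$. These must be excluded as well, since they would leave $v$ with final charge $-\frac12$. Second, in the tight case you cannot choose ``all first colors inside $R$.'' For a $2$-thread the control set $B$ lies in the list of the \emph{second} vertex, while the first color comes from $F_\alpha\subseteq L(u)$. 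The first color of a $1$-thread is likewise unrelated to $R$. The paper handles this in three ways:
\begin{itemize}
\item When $N\ge3$, it takes $\alpha\in R\setminus B_0$, so the $2$-thread is flexible and its first color can avoid $p$.
\item In $A_2$, it uses the two choices on the $1$-thread to a $3$-vertex to avoid $p$.
\item When $N=2$ and both control sets equal $R=\{\alpha,\beta\}$, it makes $\beta$ the unique color in $N(v)$; this works because $\beta\in R$ and hence $\beta\ne p,q$.
\end{itemize}
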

\begin{proof}
Suppose $W(v)>3k-8$. A direct calculation gives
\begin{equation}\label{eq:deficit}
  3k-8-W(v)=3r-8+2n_0+\frac32n_1+n_2.
\end{equation}
By Proposition~\ref{prop:reductions}\ref{red:3plus}, the root $v$ has a direct $3^+$-neighbor, so $r\ge1$.  If $r\ge3$, the right-hand side is at least $1$, a contradiction. Hence $r\in\{1,2\}$.

Delete $v$ and all internal $2$-vertices on every maximal thread incident with $v$, and call the remaining graph $H$.  By Lemma~\ref{lem:boundary-survival}, every $3^+$-neighbor of $v$ and every other endpoint of an incident thread is non-isolated in $H$.  Color $H$ by minimality.  For every direct $3^+$-neighbor $y$ of $v$, fix a witness $c_y\in\UU_\col(y,H)$; for every boundary vertex $x$ of a thread that needs a witness, fix one witness $c_x$. If several maximal $2$- or $3$-threads share the same boundary vertex, use the same $c_x$ for all of them.

Set
\[
\begin{aligned}
 R:=L(v)\setminus\Bigl(&
 \{\col(y),c_y:y\text{ direct }3^+\text{-neighbor}\}\\
 &\cup\{\col(x):x\text{ is the other endpoint of a maximal $1$-thread}\}
 \Bigr).
\end{aligned}
\]
Hence
\begin{equation}\label{eq:rootpalette}
  |R|\ge k+2-2r-n_0-n_1.
\end{equation}
Let $N:=n_2+n_3$. By Lemmas~\ref{lem:two-thread} and~\ref{lem:three-thread}, each of the $N$ long branches has a two-element control set.

\medskip\noindent\textbf{Case 1: $r=1$.}
By Proposition~\ref{prop:reductions}\ref{red:saturated}, $n_3\le k-4$, and hence
\begin{equation}\label{eq:three-short}
  n_0+n_1+n_2\ge3.
\end{equation}
Since \eqref{eq:deficit} is negative,
\[
  2n_0+\frac32n_1+n_2<5.
\]
Together with \eqref{eq:three-short}, this implies $n_0\le1$.

From \eqref{eq:rootpalette},
\[
 |R|\ge k-n_0-n_1=n_2+n_3+1=N+1.
\]
The $N$ control sets contribute at most $2N$ incidences with colors of $R$, so some $\alpha\in R$ belongs to at most one control set. Set $\col(v)=\alpha$.  Because $\alpha\in R$, restoring $v$ is proper at every direct $3^+$-neighbor and preserves the fixed witness there.  At most one long branch is rigid.

If $n_0=1$, choose a color for the first vertex of that branch by Lemma~\ref{lem:one-four}. Also choose the first color of the rigid long branch, if one exists. Together with the unique direct neighbor, these give at most three fixed neighbor colors. Every remaining branch offers at least two first colors, by Lemmas~\ref{lem:one-three}--\ref{lem:three-thread}, and at least one such branch exists by \eqref{eq:three-short}. Apply Lemma~\ref{lem:singleton} to make $v$ conflict-free, then complete the branches by the extension lemmas and Lemma~\ref{lem:shared-boundary}. This is a contradiction.

\medskip\noindent\textbf{Case 2: $r=2$.}
By Proposition~\ref{prop:reductions}\ref{red:n3}, $n_3\le k-3$. Since $n_0+n_1+n_2+n_3=k-2$, we have $n_0+n_1+n_2\ge1$. Negativity in \eqref{eq:deficit} gives
\[
  2n_0+\frac32n_1+n_2<2.
\]
Thus the only possibilities are
\begin{equation}\label{eq:A1A2}
  (n_0,n_1,n_2)=(0,0,1)
  \quad\text{or}\quad
  (n_0,n_1,n_2)=(0,1,0).
\end{equation}
Call these $A_1$ and $A_2$, respectively. For $A_1$, $N=k-2$; for $A_2$, $N=k-3$. In both cases \eqref{eq:rootpalette} gives $|R|\ge N$.

If $|R|\ge N+1$, the incidence count used in Case~1 gives a color $\alpha\in R$ belonging to at most one control set. Whenever such a color exists, set $\col(v)=\alpha$. There is at most one rigid long branch. Fix its first color, if it exists, together with the colors on the two $3^+$-neighbors of $v$. These give at most three fixed neighbor colors. Since $n_0=0$ and $k-2\ge2$, at least one branch remains flexible. Lemma~\ref{lem:singleton} chooses colors on the remaining neighbors to make $v$ conflict-free, and the extension lemmas complete the coloring. We may therefore assume the tight situation
\begin{equation}\label{eq:tight}
  |R|=N
  \quad\text{and every color of $R$ belongs to at least two control sets}.
\end{equation}
Write the $N$ two-element control sets as $Q_1,\ldots,Q_N$. Their total incidence with $R$ is at most $2N$, so \eqref{eq:tight} forces
\begin{equation}\label{eq:control-contained}
  Q_i\subseteq R\quad\text{for every $i$},
  \qquad
  \text{and every color of $R$ has incidence exactly $2$}.
\end{equation}

Let $p,q$ be the colors of the two direct neighbors of $v$. Tightness forces
\begin{equation}\label{eq:pq}
  p\ne q,
  \qquad
  p,q\notin R.
\end{equation}
In $A_1$ the estimate \eqref{eq:rootpalette} excludes four displayed colors from $L(v)$, and in $A_2$ it excludes five. Since $|L(v)|=k+2$ and $|R|=N$, equality holds in this estimate. All the displayed excluded colors are therefore pairwise distinct members of $L(v)$, proving \eqref{eq:pq}.

\smallskip\noindent\emph{Subcase $A_2$.}
Choose any $\alpha\in R$ and set $\col(v)=\alpha$.  All long branches are $3$-threads.  For each such branch choose its first color in $C_i\setminus\{\alpha\}$; since $C_i\subseteq R$ by \eqref{eq:control-contained} and $p\notin R$ by \eqref{eq:pq}, this color differs from $p$. The unique $n_1$-branch has at least two admissible first colors relative to the root color $\alpha$, so choose one different from $p$. Since $q\ne p$, the color $p$ occurs exactly once in $N(v)$.

\smallskip\noindent\emph{Subcase $A_1$ with $N\ge3$.}
Let $B_0$ be the control set of the unique $2$-thread. Since $|B_0|=2<|R|$, choose $\alpha\in R\setminus B_0$. The $2$-thread is flexible, so choose its first color different from $p$. For every $3$-thread choose its first color in $C_i\setminus\{\alpha\}$; by \eqref{eq:control-contained} this chosen color lies in $R$ and hence avoids $p$. Again $p$ occurs exactly once in $N(v)$.

\smallskip\noindent\emph{Subcase $A_1$ with $N=2$.}
Then $k=4$ and $(n_0,n_1,n_2,n_3)=(0,0,1,1)$. Write $R=\{\alpha,\beta\}$. By \eqref{eq:control-contained}, both control sets equal $R$. Set $\col(v)=\alpha$. On the $3$-thread, color its neighbor of $v$ with $\beta$, as allowed by Lemma~\ref{lem:three-thread}. Write the $2$-thread as $v-u-w-x$ and put $s=\col(x)$. Its control set is $B=R=\{\alpha,\beta\}$. In the case $\alpha\in B$ of Lemma~\ref{lem:two-thread}, the available colors for $u$ are
\[
 F_\alpha=L(u)\setminus\{\alpha,s,\beta\}.
\]
Since $|L(u)|=4$, choose $a\in F_\alpha$ and set $\col(u)=a$ and $\col(w)=\beta$. In particular, $a\ne\beta$. The four neighbors of $v$ have colors $p,q,a,\beta$. As $p,q\notin R$, the color $\beta$ occurs exactly once in $N(v)$.

In every subcase, complete the branches by Lemmas~\ref{lem:one-four}--\ref{lem:three-thread} and~\ref{lem:shared-boundary}. The resulting PCF $L$-coloring of $G$ is a contradiction. Therefore $W(v)\le3k-8$.
\end{proof}

\section{Discharging and proof of the main theorem}\label{sec:discharging}
\begin{proof}[Proof of Theorem~\ref{thm:main}]
Give every vertex $v$ the initial charge
\[
  \mu(v)=3d(v)-8.
\]
Since $\mad(G)<8/3$,
\[
  \sum_{v\in V(G)}\mu(v)=6|E(G)|-8|V(G)|<0.
\]
Redistribute charge as follows.
\begin{enumerate}[label=\textnormal{(D\arabic*)}]
\item\label{D:3} A $3$-vertex with exactly one $2$-neighbor sends $1$ to that neighbor. A $3$-vertex with exactly two $2$-neighbors sends $1/2$ to each.
\item\label{D:long} For every maximal $2$- or $3$-thread with $4^+$ endvertices, each endvertex sends $1$ to every internal $2$-vertex of the thread.
\item\label{D:one} If $vux$ is a maximal $1$-thread with $d(v)\ge4$, then $v$ sends $1$ to $u$ when $d(x)\ge4$, and sends $3/2$ to $u$ when $d(x)=3$.
\end{enumerate}
Let $\mu^*(v)$ be the final charge.

We show that $\mu^*(v)\ge0$ for every vertex $v$. By (R1), it suffices to consider vertices of degree $2$, $3$, and at least $4$.

A $2$-vertex starts with charge $-2$. By Proposition~\ref{prop:reductions}\ref{red:no4}, it belongs to a maximal thread of length at most $3$. If it lies on a maximal $2$- or $3$-thread, both endvertices are $4^+$-vertices by Proposition~\ref{prop:reductions}\ref{red:3no2}, and each sends $1$ by rule~\ref{D:long}. On a $1$-thread, two $4^+$-endpoints send $1$ each, while a $4^+$-endpoint and a $3$-endpoint send at least $3/2+1/2$. If both endpoints are $3$-vertices, Proposition~\ref{prop:reductions}\ref{red:3two} implies that each has this vertex as its unique $2$-neighbor, so each sends $1$. Thus every $2$-vertex receives at least $2$.

A $3$-vertex starts with charge $1$. By Proposition~\ref{prop:reductions}\ref{red:3plus}, it has at most two $2$-neighbors, so rule~\ref{D:3} makes it send at most $1$.

Finally, let $v$ be a $k$-vertex with $k\ge4$. Rules~\ref{D:long} and~\ref{D:one} make $v$ send exactly
\[
  n_0+\frac32n_1+2n_2+3n_3=W(v).
\]
Lemma~\ref{lem:weighted} gives
\[
  \mu^*(v)=3k-8-W(v)\ge0.
\]

Every vertex therefore has nonnegative final charge. Since charge is only redistributed,
\[
  0\le\sum_{v\in V(G)}\mu^*(v)=\sum_{v\in V(G)}\mu(v)<0,
\]
a contradiction.
\end{proof}

\section{Concluding remarks}
For planar graphs of girth at least $7$, Euler's formula gives only $\mad(G)<14/5$, which does not meet the bound in Theorem~\ref{thm:main}. Thus the cases of girth $7$ and $6$, the latter asked by Wang and Zhang~\cite[Problem~5.4]{WangZhang2025}, are not resolved by the present result.

\section*{Acknowledgment}
This work was supported by the National Natural Science Foundation of China [grant number 12471330] and the Shandong Provincial Natural Science Foundation [grant number ZR2025MS71].

\section*{Declaration of competing interest}
The authors declare that they have no known competing financial interests or personal relationships that could have appeared to influence the work reported in this paper.

\section*{Declaration of generative AI and AI-assisted technologies in the manuscript preparation process}
During the preparation of this work, the authors used OpenAI ChatGPT to assist with language editing, organization of the exposition, literature-search support, and exploratory checking of proof exposition. After using this tool, the authors reviewed and edited the content as needed and take full responsibility for the content of the article.

\end{document}